\setlist[description]{font=\bfseries}
\newtheorem{theorem}{Theorem}[section]
\newtheorem{lemma}[theorem]{Lemma}
\colorlet{inlinkcolor}{green!50!black}
\colorlet{exlinkcolor}{red!50!black}
\crefname{equation}{}{}
\pgfplotsset{compat=1.16}
\DeclareMathOperator{\cl}{cl}
\mathchardef\mhyphen="2D
\DeclareMathOperator{\spec}{spec}
\DeclareMathOperator{\lspec}{\ell_{\spec}}
\DeclareMathOperator{\linf}{\ell_{\infty}}
\DeclareMathOperator{\ltwo}{\ell_2}
\DeclareMathOperator{\sepspec}{sepspec}
\DeclareMathOperator{\logdet}{logdet}
\DeclareMathOperator{\rtdet}{rtdet}
\DeclareMathOperator{\sqr}{sqr}
\DeclareMathOperator{\matsqr}{matsqr}
\DeclareMathOperator{\relentr}{relent}
\DeclareMathOperator{\matrelentr}{matrelent}
\DeclareMathOperator{\geom}{geo}
\DeclareMathOperator{\power}{pow}
\DeclareMathOperator{\gpower}{gpow}
\DeclareMathOperator{\lmi}{LMI}
\DeclareMathOperator{\sppsd}{sPSD}
\DeclareMathOperator{\dnn}{DNN}
\DeclareMathOperator{\wsos}{SOS}
\DeclareMathOperator{\matwsos}{matSOS}
\newcommand{\knn}{\K_{\geq}}
\newcommand{\kpsd}{\K_{\succeq}}
\newcommand{\klinf}{\K_{\linf}}
\newcommand{\kltwo}{\K_{\ltwo}}
\newcommand{\klspec}{\K_{\lspec}}
\newcommand{\ksqr}{\K_{\sqr}}
\newcommand{\klog}{\K_{\log}}
\newcommand{\klogdet}{\K_{\logdet}}
\newcommand{\krtdet}{\K_{\rtdet}}
\newcommand{\kmatsqr}{\K_{\matsqr}}
\newcommand{\krelentr}{\K_{\relentr}}
\newcommand{\kmatrelentr}{\K_{\matrelentr}}
\newcommand{\kgeom}{\K_{\geom}}
\newcommand{\kpower}{\K_{\power}}
\newcommand{\kgpower}{\K_{\gpower}}
\newcommand{\klmi}{\K_{\lmi}}
\newcommand{\ksepspec}{\K_{\sepspec}}
\newcommand{\ksppsd}{\K_{\sppsd}}
\newcommand{\kdnn}{\K_{\dnn}}
\newcommand{\kwsos}{\K_{\wsos}}
\newcommand{\kmatwsos}{\K_{\matwsos}}
\newcommand{\kltwowsos}{\K_{\ell_2 \! \wsos}}
\newcommand{\klonewsos}{\K_{\ell_1 \! \wsos}}
\DeclareMathOperator{\diag}{diag}
\DeclareMathOperator{\Diag}{Diag}
\DeclareMathOperator{\tr}{tr}
\DeclareMathOperator{\intr}{int}
\DeclareMathOperator{\mat}{mat}
\DeclareMathOperator{\vect}{vec}
\DeclareMathOperator{\sdim}{sd}
\DeclareMathOperator*{\argmin}{arg\,min}
\DeclareMathOperator{\ch}{ch}
\newcommand{\iin}[1]{\llbracket #1 \rrbracket}
\newcommand{\tsum}[1]{{\textstyle \sum_{#1}}}
\newcommand{\tprod}[1]{{\textstyle \prod_{#1}}}
\newcommand{\norm}[1]{\lVert #1 \rVert}
\newcommand{\bbR}{\mathbb{R}}
\newcommand{\bbS}{\mathbb{S}}
\newcommand{\cQ}{\mathcal{Q}}
\newcommand{\C}{\mathcal{C}}
\newcommand{\K}{\mathcal{K}}
\newcommand{\bK}{\bar{\mathcal{K}}}
\newcommand{\bKn}{\bar{K}}
\newcommand{\Kpr}{K_\text{pr}}
\newcommand{\Kdu}{K_\text{du}}
\newcommand{\pitwo}{\pi_{\ell_2}}
\newcommand{\piinf}{\pi_{\ell_\infty}}
\newcommand{\bigO}{\mathcal{O}}
\newcommand{\Tau}{\mathrm{T}}
\newcommand{\bs}{\bar{s}}
\newcommand{\bz}{\bar{z}}
\newcommand{\dotD}{\dot{D}}
\newcommand{\ddotD}{\ddot{D}}
\newcommand{\dotL}{\dot{L}}
\newcommand{\ddotL}{\ddot{L}}
\newcommand{\dotU}{\dot{U}}
\newcommand{\ddotU}{\ddot{U}}
\newcommand{\dotR}{\dot{R}}
\newcommand{\ddotR}{\ddot{R}}
\begin{document}

\title{Performance enhancements for a generic \\conic interior point algorithm}

\author[1]{Chris Coey}
\author[1]{Lea Kapelevich}
\author[2]{Juan Pablo Vielma}
\affil[1]{Operations Research Center, MIT, Cambridge, MA}
\affil[2]{Google Research and MIT Sloan School of Management, Cambridge, MA}

\maketitle

\begin{abstract}
In recent work, we provide computational arguments for expanding the class of proper cones recognized by conic optimization solvers, to permit simpler, smaller, more natural conic formulations.
We define an exotic cone as a proper cone for which we can implement a small set of tractable (i.e.\ fast, numerically stable, analytic) oracles for a logarithmically homogeneous self-concordant barrier for the cone or for its dual cone.
Our extensible, open source conic interior point solver, Hypatia, allows modeling and solving any conic optimization problem over a Cartesian product of exotic cones.
In this paper, we introduce Hypatia's interior point algorithm.
Our algorithm is based on that of \citet{skajaa2015homogeneous}, which we generalize by handling exotic cones without tractable primal oracles.
With the goal of improving iteration count and solve time in practice, we propose a sequence of four enhancements to the interior point stepping procedure of \citet{skajaa2015homogeneous}, which alternates between prediction and centering steps: (1) we use a less restrictive central path proximity condition, (2) we adjust the prediction and centering directions using a new third order directional derivative barrier oracle, (3) we use a single backtracking search on a quadratic curve instead of two line searches, and (4) we use a combined prediction and centering step.
We implement 23 useful exotic cones in Hypatia.
We summarize the complexity of computing oracles for these cones, showing that our new third order oracle is not a bottleneck, and we derive efficient and numerically stable oracle implementations for several cones.
We generate a diverse benchmark set of 379 conic problems from 37 different applied examples.
Our computational testing shows that each stepping enhancement improves Hypatia's iteration count and solve time.
Altogether, the enhancements reduce the shifted geometric means of iteration count and solve time by over 80\% and 70\% respectively.

\end{abstract}

\paragraph{Funding}
This work has been partially funded by the National Science Foundation under grant OAC-1835443 and the Office of Naval Research under grant N00014-18-1-2079.

\setcounter{tocdepth}{3}
\tableofcontents

\section{Introduction}
\label{sec:intro}

Any convex optimization problem may be represented as a conic problem that minimizes a linear function over the intersection of an affine subspace with a Cartesian product of primitive proper cones (i.e.\ irreducible, closed, convex, pointed, and full-dimensional conic sets). 
Under certain conditions, a conic problem has a simple and easily checkable certificate of optimality, primal infeasibility, or dual infeasibility \citep{permenter2017solving}.
Most commercial and open-source conic solvers (such as CSDP \citep{borchers1999csdp}, CVXOPT \citep{andersen2011interior}, ECOS \citep{serrano2015algorithms}, MOSEK \citep{mosek2020modeling}, SDPA \citep{yamashita2003implementation}, Alfonso \citep{papp2017homogeneous}) implement primal-dual interior point methods (PDIPMs) based on the theory of logarithmically homogeneous self-concordant barrier (LHSCB) functions.
Compared to first order conic methods (see \citet{o2016conic} on SCS solver), idealized PDIPMs typically exhibit higher per-iteration cost, but have a lower iteration complexity of $\bigO(\sqrt{\nu} \log (1 / \varepsilon))$ iterations to converge to $\varepsilon$ tolerance, where $\nu$ is the barrier parameter of the LHSCB.
We limit the scope of this paper to conic PDIPMs, but note that there are other notions of duality and PDIPMs for convex problems outside of the conic realm (see e.g.\ \citet{karimi2018primal}).

\subsection{Conic optimization with Hypatia solver}
\label{sec:intro:exotic}

Hypatia \citep{coey2020solving} is an open-source, extensible conic primal-dual interior point solver.\footnote{
Hypatia is available at \href{https://github.com/chriscoey/Hypatia.jl}{github.com/chriscoey/Hypatia.jl} under the MIT license.
See \citet{coey2021hypatiab} for documentation, examples, and instructions for using Hypatia.
}
Hypatia is written in the Julia language \citep{bezanson2017julia} and is accessible through a flexible, low-level native interface or the modeling tool JuMP \citep{dunning2017jump}.
A key feature of Hypatia is a generic cone interface that allows users to define new \emph{exotic cones}.
An exotic cone is a proper cone for which we can implement a small set of tractable LHSCB oracles (listed in \cref{sec:intro:sy}) for either the cone or its dual cone.
Defining a new cone through Hypatia's cone interface makes both the cone and its dual available for use in conic formulations.
We have already predefined 23 useful exotic cone types (some with multiple variants) in Hypatia.
Several cones are new and required the development of LHSCBs and efficient procedures for oracle evaluations (see \citet{coey2021self,kapelevich2021sum}).

Advanced conic solvers such as MOSEK 9 currently recognize at most only a handful of standard cones: nonnegative, (rotated) second order, positive semidefinite (PSD), and three-dimensional exponential and power cones.
In \citet{coey2020solving}, we show across seven applied examples that modeling with the much larger class of exotic cones often permits simpler, smaller, more natural conic formulations.
Our computational experiments with these examples demonstrate the potential advantages, especially in terms of solve time and memory usage, of solving natural formulations with Hypatia compared to solving standard conic extended formulations with either Hypatia or MOSEK 9.
However, a description of Hypatia's algorithms was outside the scope of \citet{coey2020solving}.
In contrast, the main purpose of this paper is to introduce some key features of our exotic conic PDIPM that enable it to be both general and performant.

\subsection{The Skajaa-Ye algorithm}
\label{sec:intro:sy}

Most conic PDIPM solvers use efficient algorithms specialized for symmetric cones, in particular, the nonnegative, (rotated) second order, and PSD cones.
Although non-symmetric conic PDIPMs proposed by \citet{nesterov1996infeasible,nesterov2012towards} can handle a broader class of cones, they have several disadvantages compared to specialized symmetric methods (e.g. requiring larger linear systems, strict feasibility of the initial iterate, or conjugate LHSCB oracles).
The algorithm by \citet{skajaa2015homogeneous}, henceforth referred to as \emph{SY}, addresses these issues by approximately tracing the central path of the homogeneous self-dual embedding (HSDE) \citep{andersen2003implementing,xu1996simplified} to an approximate solution for the HSDE.
This final iterate provides an approximate conic certificate for the conic problem, if a conic certificate exists.
The SY algorithm relies on an idea by \citet{nesterov2012towards} that a high quality prediction direction (enabling a long step and rapid progress towards a solution) can be computed if the current iterate is in close proximity to the central path (i.e.\ it is an approximate scaling point).
To restore centrality after each prediction step, SY performs a series of centering steps.

By using a different definition of central path proximity to \citet{nesterov2012towards}, SY avoids needing conjugate LHSCB oracles.\footnote{
Some proposed techniques such as the Hessian scaling updates and central path proximity definitions of \citet{myklebust2014interior,dahl2021primal} require conjugate LHSCB oracles.
}
Indeed, a major advantage of SY is that it only requires access to a few tractable oracles for the primal cone: an initial interior point, feasibility check, and gradient and Hessian evaluations for the LHSCB.
In our experience, for a large class of proper cones, these oracles can be evaluated analytically, i.e.\ without requiring the implementation of iterative numerical procedures (such as optimization) that can be expensive and may need numerical tuning.
Conjugate LHSCB oracles in general require optimization, and compared to the analytic oracles, they are often significantly less efficient and more prone to numerical instability.

\subsection{Practical algorithmic developments}
\label{sec:intro:alg}

For many proper cones of interest, including most of Hypatia's non-symmetric cones, we are aware of LHSCBs with tractable oracles for either the cone or its dual cone but not both.
Suppose a problem involves a Cartesian product of exotic cones, some with primal oracles implemented and some with dual oracles implemented (as in several example formulations described in \citet{coey2020solving}). 
In this case, SY can solve neither the primal problem nor its conic dual, as SY requires primal oracles.
Our algorithm generalizes SY to allow a conic formulation over any Cartesian product of exotic cones.

The focus of \citet{skajaa2015homogeneous} is demonstrating that SY has the best known iteration complexity for conic PDIPMs.
This complexity analysis was corrected by \citet{papp2017homogeneous}, who implemented SY in their recent MATLAB solver Alfonso \citep{papp2020alfonso,papp2021alfonso}.
It is well known that performant PDIPM implementations tend to violate assumptions used in iteration complexity analysis, so in this paper we are not concerned with iteration complexity.
Our goal is to reduce iteration counts and solve times in practice, by enhancing the performance of the interior point stepping procedure proposed by SY and implemented by Alfonso.

The basic SY-like stepping procedure computes a prediction or centering direction by solving a large structured linear system, performs a backtracking line search in the direction, and steps as far as possible given a restrictive central path proximity condition.
We propose a sequence of four practical performance enhancements.
\begin{description}
\item[Less restrictive proximity.]
We use a relaxed central path proximity condition, allowing longer prediction steps and fewer centering steps.
\item[Third order adjustments.]
After computing the prediction or centering direction, we compute a \emph{third order adjustment} (TOA) direction using a new \emph{third order oracle} (TOO) for exotic cones.
We use a line search in the unadjusted direction to determine how to combine it with the TOA direction, before performing a second line search and stepping in the new adjusted direction.
\item[Curve search.]
Due to the central path proximity checks, each backtracking line search can be quite expensive.
Instead of performing two line searches, we use a single backtracking search along a particular quadratic curve of combinations of the unadjusted and TOA directions.
\item[Combined directions.]
Unlike SY, most conic PDIPMs do not use separate prediction and centering phases.
We compute the prediction and centering directions and their associated TOA directions, then perform a backtracking search along a quadratic curve of combinations of all four directions.
\end{description}

Our TOA approach is distinct from the techniques by \citet{mehrotra1992implementation,dahl2021primal} that also use higher order LHSCB information.\footnote{
To avoid confusion, we do not use the term `corrector' in this paper.
In the terminology of \citet{mehrotra1992implementation,dahl2021primal} our TOA approach is a type of `higher order corrector' technique, but also our unadjusted centering direction is referred to by \citet{skajaa2015homogeneous,papp2017homogeneous} as the `corrector' direction.
}
Unlike these techniques, we derive adjustments (using the TOO) for both the prediction and centering directions.
Our TOO has a simpler and more symmetric structure than the third order term used by \citet{dahl2021primal}, and we leverage this for fast and numerically stable evaluations.
Whereas the method by \citet{mehrotra1992implementation} only applies to symmetric cones, and \citet{dahl2021primal} test their technique only for the standard exponential cone, we implement and test our TOO for all of Hypatia's 23 predefined cones.
In our experience, requiring a tractable TOO is only as restrictive as requiring tractable gradient and Hessian oracles.
We show that the time complexity of the TOO is no higher than that of the other required oracles for each of our cones.
To illustrate, we describe efficient and numerically stable TOO procedures for several cones that can be characterized as intersections of slices of the PSD cone.

Although this paper is mainly concerned with the stepping procedures, we also outline our implementations of other key algorithmic components. 
These include preprocessing of problem data, finding an initial iterate, the solution of structured linear systems for search directions, and efficient backtracking searches with central path proximity checks. 
We note that Hypatia has a variety of algorithmic options for these components; these different options can have a dramatic impact on overall solve time and memory usage, but in most cases they have minimal effect on the iteration count.
For the purposes of this paper, we only describe and test one set of (default) options for these components.

\subsection{Benchmark instances and computational testing}
\label{sec:intro:testing}

We implement and briefly describe 37 applied examples (available in Hypatia's examples folder), each of which has options for creating formulations of different types and sizes.
From these examples, we generate 379 problem instances of a wide range of sizes.
Since there is currently no conic benchmark storage format that recognizes more than a handful of cone types, we generate all instances on the fly using JuMP or Hypatia's native interface.
All of Hypatia's predefined cones are represented in these instances, so we believe this is the most diverse conic benchmark set available.

On this benchmark set, we run five different stepping procedures: the basic SY-like procedure (similar to Alfonso) and the sequence of four cumulative enhancements to this procedure.
Our results show that each enhancement tends to improve Hypatia's iteration count and solve time, with minimal impact on the number of instances solved.
We do not enforce time or iteration limits, but we note that under strict limits the enhancements would greatly improve the number of instances solved.
The TOA enhancement alone leads to a particularly consistent improvement of around 45\% for iteration counts.
Overall, the enhancements together reduce the iterations and solve time by more than 80\% and 70\% respectively.
For instances that take more iterations or solve time, the enhancements tend to yield greater relative improvements in these measures.

\subsection{Overview}
\label{sec:intro:over}

In \cref{sec:prelim}, we define our mathematical notation.
In \cref{sec:exotic}, we define exotic cones, LHSCBs, and our required cone oracles (including the TOO).
In \cref{sec:form}, we describe Hypatia's general primal-dual conic form, associated conic certificates, and the HSDE.
In \cref{sec:alg}, we define the central path of the HSDE and central path proximity measures, and we outline Hypatia's high level algorithm.
We also derive the prediction and centering directions and our new TOA directions, and we describe the SY-like stepping procedure and our series of four enhancements to this procedure.
In \crefrange{sec:linalg}{sec:search}, we discuss advanced procedures for preprocessing and initial point finding, solving structured linear systems for directions, and performing efficient backtracking searches and proximity checks.
In \cref{sec:cones}, we briefly introduce Hypatia's predefined exotic cones and show that our TOO is relatively cheap to compute, and in \cref{sec:toos} we describe some TOO implementations.
In \cref{sec:testing}, we summarize our applied examples and exotic conic benchmark instances, and finally we present our computational results demonstrating the practical efficacy of our stepping enhancements.

\section{Notation}
\label{sec:prelim}

For a natural number $d$, we define the index set $\iin{d} \coloneqq \{1, 2, \ldots, d\}$.
Often we construct vectors with round parentheses, e.g. $(a, b, c)$, and matrices with square brackets, e.g. $\begin{bsmallmatrix} a & b \\ c & d \end{bsmallmatrix}$.
For a set $\C$, $\cl(\C)$ and $\intr(\C)$ denote the closure and interior of $\C$, respectively.

$\bbR$ denotes the space of reals, and $\bbR_{\geq}$, $\bbR_>$, $\bbR_{\leq}$, $\bbR_<$ denote the nonnegative, positive, nonpositive, and negative reals.
$\bbR^d$ is the space of $d$-dimensional real vectors, and $\bbR^{d_1 \times d_2}$ is the $d_1$-by-$d_2$-dimensional real matrices.
The vectorization operator $\vect : \bbR^{d_1 \times d_2} \to \bbR^{d_1 d_2}$ maps matrices to vectors by stacking columns, and its inverse operator is $\mat_{d_1, d_2} : \bbR^{d_1 d_2} \to \bbR^{d_1 \times d_2}$.

$\bbS^d$ is the space of symmetric matrices with side dimension $d$, and $\bbS^d_{\succeq}$ and $\bbS^d_{\succ}$ denote the positive semidefinite and positive definite symmetric matrices. 
The inequality $S \succeq Z$ is equivalent to $S - Z \in \bbS^d_{\succeq}$ (and similarly for the strict inequality $\succ$ and $\bbS^d_{\succ}$). 
We let $\sdim(d) \coloneqq d (d + 1) / 2$ be the dimension of the vectorized upper triangle of $\bbS^d$.
We overload the vectorization operator $\vect : \bbS^d \to \bbR^{\sdim(d)}$ to perform an \emph{svec} transformation, which rescales off-diagonal elements by $\sqrt{2}$ and stacks columns of the upper triangle (or equivalently, rows of the lower triangle). 
For example, for $S \in \bbS^3$ we have $\sdim(3) = 6$ and $\vect(S) = ( S_{1,1}, \sqrt{2} S_{1,2}, S_{2,2}, \sqrt{2} S_{1,3}, \sqrt{2} S_{2,3}, S_{3,3} ) \in \bbR^{\sdim(3)}$.
The inverse mapping $\mat : \bbR^{\sdim(d)} \to \bbS^d$ is well-defined.

For a vector or matrix $A$, the transpose is $A'$ and the trace is $\tr(A)$.
We use the standard inner product on $\bbR^d$, i.e.\ $s' z = \tsum{i \in \iin{d}} s_i z_i$ for $s, z \in \bbR^d$, which equips $\bbR^d$ with the standard norm $\lVert s \rVert = (s' s)^{1/2}$.
The linear operators $\vect$ and $\mat$ preserve inner products, e.g. $\vect(S)' \vect(Z) = \tr(S' Z)$ for $S, Z \in \bbR^{d_1 \times d_2}$ or $S, Z \in \bbS^d$.
$\Diag : \bbR^d \to \bbS^d$ is the diagonal matrix of a given vector, and $\diag : \bbS^d \to \bbR^d$ is the vector of the diagonal of a given matrix. 
For dimensions implied by context, $e$ is a vector of $1$s, $e_i$ is the $i$th unit vector, and $0$ is a vector or matrix of $0$s.

$\lvert x \rvert$ is the absolute value of $x \in \bbR$ and $\log(x)$ is the natural logarithm of $x > 0$.
$\det(X)$ is the determinant of $X \in \bbS^d$, and $\logdet(X)$ is the log-determinant of $X \succ 0$.
For a vector $x \in \bbR^d$, $\lVert x \rVert_\infty = \max_{i \in \iin{d}} \lvert x_i \rvert$ is the $\ell_\infty$ norm and $\lVert x \rVert_1 = \sum_{i \in \iin{d}} \lvert x_i \rvert$ is the $\ell_1$ norm.

Suppose the function $f : \intr(\C) \rightarrow \bbR$ is strictly convex and three times continuously differentiable on the interior of a set $\C \subset \bbR^d$.
For a point $p \in \intr(\C)$, we denote the gradient and Hessian of $f$ at $p$ as $\nabla f(p) \in \bbR^d$ and $\nabla^2 f(p) \in \bbS^d_{\succ}$. 
Given an $h \in \bbR^d$, the first, second, and third order directional derivatives of $f$ at $p$ in direction $h$ are $\nabla f(p) [h] \in \bbR$, $\nabla^2 f(p) [h, h] \in \bbR_{\geq}$, and $\nabla^3 f(p) [h, h, h] \in \bbR$.

\section{Exotic cones and oracles}
\label{sec:exotic}

Let $\K$ be a proper cone in $\bbR^q$, i.e.\ a conic subset of $\bbR^q$ that is closed, convex, pointed, and full-dimensional (see \citet{skajaa2015homogeneous}).
Note that requiring $\K$ to be a subset of $\bbR^q$ simplifies our notation but is not restrictive, e.g.\ for the PSD cone, we use the standard svec vectorization (see \cref{sec:prelim}).
The dual cone of $\K$ is $\K^\ast$, which is also a proper cone in $\bbR^q$:
\begin{equation}
\K^\ast \coloneqq \{ 
z \in \bbR^q : s' z \geq 0, \forall s \in \K 
\}.
\label{eq:Kdual}
\end{equation}

Following \citet[Sections 2.3.1 and 2.3.3]{nesterov1994interior}, $f : \intr(\K) \to \bbR$ is a $\nu$-LHSCB for $\K$, where $\nu \geq 1$ is the \emph{LHSCB parameter}, if it is three times continuously differentiable, strictly convex, satisfies $f(s_i) \to \infty$ along every sequence $s_i \in \intr(\K)$ converging to the boundary of $\K$, and:
\begin{subequations}
\begin{alignat}{3}
\big\lvert \nabla^3 f(s) [h, h, h] \big\rvert 
& \leq 2 \bigl( \nabla^2 f(s) [h, h] \bigr)^{3/2} 
&\quad&\forall s \in \K, h \in \bbR^q,
\label{eq:lhscb:sc}
\\
f(\theta s) 
& = f(s) - \nu \log(\theta) 
&\quad&\forall s \in \K, \theta \in \bbR.
\label{eq:lhscb:lh}
\end{alignat}
\end{subequations}
Following \citet[Section 3.3]{renegar2001mathematical}, we define the \emph{conjugate} of $f$, $f^\ast : \intr (\K^\ast) \to \bbR$, as: 
\begin{equation}
f^\ast(z) \coloneqq 
-\textstyle\inf_{s \in \intr(\K)} \{ s' z + f(s) \},
\label{eq:conj}
\end{equation}
which is a $\nu$-LHSCB for $\K^\ast$.

A Cartesian product $\K = \K_1 \times \cdots \times \K_K$ of $K$ proper cones is a proper cone, and its dual cone is $\K^\ast = \K_1^\ast \times \cdots \times \K_K^\ast$.
In this case, if $f_k$ is a $\nu_k$-LHSCB for $\K_k$, then $\sum_{k \in \iin{K}} f_k$ is an LHSCB for $\K$ with parameter $\sum_{k \in \iin{K}} \nu_k$ \citep[Proposition 2.3.3]{nesterov1994interior}.
We call $\K$ a primitive cone if it cannot be written as a Cartesian product of two or more lower-dimensional cones (i.e.\ $K$ must equal $1$).
Note $\K^\ast$ is primitive if and only if $\K$ is primitive.
Primitive proper cones are the fundamental building blocks of conic formulations.

We call a proper cone $\K$ an exotic cone if we can implement a particular set of tractable oracles for either $\K$ or $\K^\ast$.
Suppose we have tractable oracles for $\K \subset \bbR^q$ and let $f : \intr(\K) \to \bbR$ denote the $\nu$-LHSCB for $\K$.
The oracles for $\K$ that we require in this paper are as follows.
\begin{description}
\item[Feasibility check.] 
The strict feasibility oracle checks whether a given point $s \in \bbR^q$ satisfies $s \in \intr(\K)$.
\item[Gradient and Hessian evaluations.]
Given a point $s \in \intr(\K)$, the gradient oracle $g$ and Hessian oracle $H$ evaluated at $s$ are:
\begin{subequations}
\begin{align}
g(s) & \coloneqq \nabla f(s) \in \bbR^q,
\\
H(s) & \coloneqq \nabla^2 f(s) \in \bbS^q_{\succ}.
\end{align}
\end{subequations}
\item[Third order directional derivative.] 
Given a point $s \in \intr(\K)$ and a direction $\delta_s \in \bbR^q$, our new \emph{third order oracle} (TOO), denoted $\Tau$, is a rescaled third order directional derivative vector:
\begin{equation}
\Tau (s, \delta_s) 
\coloneqq -\tfrac{1}{2} \nabla^3 f(s) [\delta_s, \delta_s] 
\in \bbR^q.
\label{eq:too}
\end{equation}
\item[Initial interior point.] 
The initial interior point $t \in \intr(\K)$ is an arbitrary point in the interior of $\K$ (which is nonempty since $\K$ is proper). 
\end{description}

In \cref{sec:cones}, we introduce Hypatia's predefined cones and discuss the time complexity of computing the feasibility check, gradient, Hessian, and TOO oracles.
In \cref{sec:toos}, we describe efficient and numerically stable techniques for computing these oracles for a handful of our cones.
Although Hypatia's generic cone interface allows specifying additional oracles that can improve speed and numerical performance (e.g.\ a dual cone feasibility check, Hessian product, and inverse Hessian product), these optional oracles are outside the scope of this paper.

For the initial interior point (which Hypatia only calls once, when finding an initial iterate), we prefer to use the \emph{central point} of $\K$.
This is the unique point satisfying $t \in \intr (\K) \cap \intr (\K^\ast)$ and $t = -g(t)$ \citep{dahl2021primal}.
It can also be characterized as the solution to the following strictly convex problem:
\begin{equation}
\textstyle\argmin_{s \in \intr(\K)} \bigl( f(s) + \tfrac{1}{2} \lVert s \rVert^2 \bigr).
\label{eq:centralpoint}
\end{equation}
For the nonnegative cone $\K = \bbR_{\geq}$, $f(s) = -\log(s)$ is an LHSCB with $\nu = 1$, and we have $g(s) = -s^{-1}$ and the central point $t = 1 = -g(1)$.
For some of Hypatia's cones, we are not aware of a simple analytic expression for the central point, in which case we typically use a non-central interior point.

\section{General conic form and certificates}
\label{sec:form}

In \cref{sec:form:form,sec:form:cert}, we describe our general conic primal-dual form and the associated conic certificates.
In \cref{sec:form:hsde}, we introduce the homogeneous self-dual embedding (HSDE) conic feasibility problem, a solution of which may provide a conic certificate.

\subsection{General conic form}
\label{sec:form:form}

Hypatia uses the following primal conic form over variable $x \in \bbR^n$:
\begin{subequations}
\begin{align}
\textstyle\inf_x \quad c' x &:
\label{eq:prim:obj}
\\
b - A x &= 0,
\label{eq:prim:eq}
\\
h - G x &\in \K,
\label{eq:prim:K}
\end{align}
\label{eq:prim}
\end{subequations}
where $c \in \bbR^n$, $b \in \bbR^p$, and $h \in \bbR^q$ are vectors, $A : \bbR^n \to \bbR^p$ and $G : \bbR^n \to \bbR^q$ are linear maps, and $\K \subset \bbR^q$ is a Cartesian product $\K = \K_1 \times \cdots \times \K_K$ of exotic cones.
For $k \in \iin{K}$, we let $q_k = \dim(\K_k)$, so $\sum_k q_k = q = \dim(\K)$.
Henceforth we use $n, p, q$ to denote respectively the variable, equality, and conic constraint dimensions of a conic problem.

Once a proper cone $\K_k$ is defined through Hypatia's generic cone interface, both $\K_k$ and $\K_k^\ast$ may be used in any combination with other cones recognized by Hypatia to construct the Cartesian product cone $\K$ in \cref{eq:prim:K}.
The primal form \cref{eq:prim} matches CVXOPT's form, however CVXOPT only recognizes symmetric cones \citep{vandenberghe2010cvxopt}.
Unlike the conic form used by \citet{skajaa2015homogeneous,papp2021alfonso}, which recognizes conic constraints of the form $x \in \K$, our form does not require introducing slack variables to represent a more general constraint $h - G x \in \K$.

The conic dual problem of \cref{eq:prim}, over variables $y \in \bbR^p$ and $z \in \bbR^q$ associated with \cref{eq:prim:eq,eq:prim:K}, is:
\begin{subequations}
\begin{align}
\textstyle\sup_{y, z} \quad -b' y - h' z &:
\label{eq:dual:obj}
\\
c + A' y + G' z &= 0,
\label{eq:dual:eq}
\\
z &\in \K^\ast,
\label{eq:dual:K}
\end{align}
\label{eq:dual}
\end{subequations}
where \cref{eq:dual:eq} is associated with the primal variable $x \in \bbR^n$.

\subsection{Conic certificates}
\label{sec:form:cert}

Under certain conditions, there exists a simple conic certificate providing an easily verifiable proof of infeasibility of the primal \cref{eq:prim} or dual \cref{eq:dual} problem (via the conic generalization of Farkas' lemma) or optimality of a given primal-dual solution.
\begin{description}
\item[A primal improving ray] $x$ is a feasible direction for the primal along which the objective improves:
\begin{subequations}
\begin{align}
c' x &< 0,
\label{eq:dualinf:obj}
\\
-A x &= 0,
\label{eq:dualinf:eq}
\\
-G x &\in \K,
\label{eq:dualinf:K}
\end{align}
\label{eq:dualinf}
\end{subequations}
and hence it certifies dual infeasibility.
\item[A dual improving ray] $(y, z)$ is a feasible direction for the dual along which the objective improves:
\begin{subequations}
\begin{align}
-b' y - h' z &> 0,
\label{eq:priminf:obj}
\\
A' y + G' z &= 0,
\label{eq:priminf:eq}
\\
z &\in \K^\ast,
\label{eq:priminf:K}
\end{align}
\label{eq:priminf}
\end{subequations}
and hence it certifies primal infeasibility.
\item[A complementary solution] $(x, y, z)$ satisfies the primal-dual feasibility conditions \cref{eq:prim:eq,eq:prim:K,eq:dual:eq,eq:dual:K}, and has equal and attained primal and dual objective values:
\begin{equation}
c' x = -b' y - h' z,
\label{eq:compl}
\end{equation}
and hence certifies optimality of $(x, y, z)$ via conic weak duality.
\end{description}
One of these certificates exists if neither the primal nor the dual is \emph{ill-posed}.
Intuitively, according to \citet[Section 7.2]{mosek2020modeling}, a conic problem is ill-posed if a small perturbation of the problem data can change the feasibility status of the problem or cause arbitrarily large perturbations to the optimal solution (see \citet{permenter2017solving} for more details).

\subsection{Homogeneous self-dual embedding}
\label{sec:form:hsde}

The HSDE is a self-dual conic feasibility problem in variables $x \in \bbR^n, y \in \bbR^p, z \in \bbR^q, \tau \in \bbR, s \in \bbR^q, \kappa \in \bbR$ (see \citet[Section 6]{vandenberghe2010cvxopt}), derived from a homogenization of the primal-dual optimality conditions \cref{eq:prim:eq,eq:prim:K,eq:dual:eq,eq:dual:K,eq:compl}:
\begin{subequations}
\begin{align}
\begin{bmatrix}
0 \\ 0 \\ s \\ \kappa
\end{bmatrix}
& =
\begin{bmatrix}
0 & A' & G' & c \\
-A & 0 & 0 & b \\
-G & 0 & 0 & h \\
-c' & -b' & -h' & 0
\end{bmatrix}
\begin{bmatrix}
x \\ y \\ z \\ \tau
\end{bmatrix},
\label{eq:hsde:eq}
\\
(z, \tau, s, \kappa)
& \in \bigl( 
\K^\ast \times \bbR_{\geq} \times \K \times \bbR_{\geq} 
\bigr).
\label{eq:hsde:K}
\end{align}
\label{eq:hsde}
\end{subequations}
For convenience we let $\omega \coloneqq (x, y, z, \tau, s, \kappa) \in \bbR^{n + p + 2q + 2}$ represent a point.
We define the structured $4 \times 6$ block matrix $E \in \bbR^{(n + p + q + 1) \times \dim(\omega)}$ such that \cref{eq:hsde:eq} is equivalent to:
\begin{equation}
E \omega = 0.
\end{equation}
Here we assume $E$ has full row rank; in \cref{sec:linalg} we discuss preprocessing techniques that handle linearly dependent rows.
Note that $\omega = 0$ satisfies \cref{eq:hsde}, so the HSDE is always feasible.
A point $\omega$ is an interior point if it is strictly feasible for the conic constraints \cref{eq:hsde:K}, i.e.\ $\omega$ satisfies $(z, \tau, s, \kappa) \in \intr \bigl( \K^\ast \times \bbR_{\geq} \times \K \times \bbR_{\geq} \bigr)$.

Suppose a point $\omega$ is feasible for the HSDE \cref{eq:hsde}.
From skew symmetry of the square $4 \times 4$ block matrix in \cref{eq:hsde:eq}, we have $s' z + \kappa \tau = 0$.
From the conic constraints \cref{eq:hsde:K} and the dual cone inequality \cref{eq:Kdual} we have $s' z \geq 0$ and $\kappa \tau \geq 0$.
Hence $s' z = \kappa \tau = 0$.
We consider an exhaustive list of cases below.
\begin{description}
\item[Optimality.] 
If $\tau > 0, \kappa = 0$, then $(x, y, z) / \tau$ is a complementary solution satisfying the primal-dual optimality conditions \cref{eq:prim:eq,eq:prim:K,eq:dual:eq,eq:dual:K,eq:compl}.
\item[Infeasibility.] 
If $\tau = 0, \kappa > 0$, then $c' x + b' y + h' z < 0$ and we consider two sub-cases.
\begin{description}
\item[Of primal.] 
If $b' y + h' z < 0$, then $(y, z)$ is a primal infeasibility certificate satisfying \cref{eq:priminf}.
\item[Of dual.] 
If $c' x < 0$, then $x$ is a dual infeasibility certificate satisfying \cref{eq:dualinf}.
\end{description}
\item[No information.] 
If $\tau = \kappa = 0$, then $\omega$ provides no information about the feasibility or optimal values of the primal or dual.
\end{description}
Thus an HSDE solution $\omega$ satisfying $\kappa + \tau > 0$ provides an optimality or infeasibility certificate (see \citet[Lemma 1]{skajaa2015homogeneous} and \citet[Section 6.1]{vandenberghe2010cvxopt}).

According to \citet[Section 2]{skajaa2015homogeneous}, if the primal and dual problems are both feasible and have zero duality gap, SY (their algorithm) finds an HSDE solution with $\tau > 0$ (yielding a complementary solution), and if the primal or dual (possibly both) is infeasible, SY finds an HSDE solution with $\kappa > 0$ (yielding an infeasibility certificate).
This implies that if SY finds a solution with $\kappa = \tau = 0$, then $\kappa = \tau = 0$ for all solutions to the HSDE; in this case, no complementary solution or improving ray exists, and the primal or dual (possibly both) is ill-posed \citep{permenter2017solving}.
The algorithm we describe in \cref{sec:alg} is an extension of SY that inherits these properties.

\section{Central path following algorithm}
\label{sec:alg}

In \cref{sec:alg:cp}, we describe the central path of the HSDE, and in \cref{sec:alg:prox} we define central path proximity measures.
In \cref{sec:alg:alg}, we outline a high level PDIPM that maintains iterates close to the central path, and we give numerical convergence criteria for detecting approximate conic certificates.
In \cref{sec:alg:dir}, we derive prediction and centering directions and our corresponding TOA directions using the TOO.
Finally in \cref{sec:alg:step}, we summarize an SY-like stepping procedure and describe our sequence of four enhancements to this procedure.

\subsection{Central path of the HSDE}
\label{sec:alg:cp}

We define the HSDE in \cref{eq:hsde}.
Recall that $\K$ in our primal conic form \cref{eq:prim} is a Cartesian product $\K = \K_1 \times \cdots \times \K_K$ of $K$ exotic cones.
We partition the exotic cone indices $\iin{K}$ into two sets: $\Kpr$ for cones with primal oracles (i.e.\ for $\K_k$) and $\Kdu$ for cones with dual oracles (i.e.\ for $\K_k^\ast$).
For convenience, we append the $\tau$ and $\kappa$ variables onto the $s$ and $z$ variables.
Letting $\bKn = K + 1$, we define for $k \in \iin{\bKn}$:
\begin{subequations}
\begin{align}
\bK_k & \coloneqq \begin{cases}
\K_k & k \in \Kpr, \\
\K_k^\ast & k \in \Kdu, \\
\bbR_{\geq} & k = \bKn,
\end{cases}
\\
(\bz_k, \bs_k) & \coloneqq \begin{cases}
(z_k, s_k) & k \in \Kpr, \\
(s_k, z_k) & k \in \Kdu, \\
(\kappa, \tau) & k = \bKn.
\end{cases}
\end{align}
\end{subequations}

For a given initial interior point $\omega^0 = (x^0, y^0, z^0, \tau^0, s^0, \kappa^0)$, the central path of the HSDE is the trajectory of solutions $\omega_\mu = (x_\mu, y_\mu, z_\mu, \tau_\mu, s_\mu, \kappa_\mu)$, parameterized by $\mu > 0$, satisfying:
\begin{subequations}
\begin{align}
E \omega_\mu &= \mu E \omega^0,
\label{eq:cp:eq}
\\
\bz_{\mu,k} + \mu g_k(\bs_{\mu,k})
&= 0 
\quad \forall k \in \iin{\bKn},
\label{eq:cp:psi}
\\
(\bz_\mu, \bs_\mu) &\in \intr (\bK^\ast \times \bK).
\label{eq:cp:K}
\end{align}
\label{eq:cp}
\end{subequations}
When all exotic cones have primal oracles (i.e.\ $\Kdu$ is empty), our definition \cref{eq:cp} exactly matches the central path defined in \citet[Equation 32]{vandenberghe2010cvxopt}, and only differs from the definition in \citet[Equations 7-8]{skajaa2015homogeneous} in the affine form (i.e.\ the variable names and affine constraint structure).
Unlike SY, our central path condition \cref{eq:cp:psi} allows cones with dual oracles ($\Kdu$ may be nonempty).

To obtain an initial point $\omega^0$, we first let:
\begin{equation}
\bigl( \bz^0_k, \bs^0_k \bigr) =
( -g_k (t_k), t_k )
\quad \forall k \in \iin{\bKn},
\label{eq:initsz}
\end{equation}
where $t_k \in \intr \bigl( \bK_k \bigr)$ is the initial interior point oracle (note that $\tau^0 = \kappa^0 = 1$).
Although $x^0$ and $y^0$ can be chosen arbitrarily, we let $x^0$ be the solution of:
\begin{subequations}
\begin{align}
\textstyle\min_{x \in \bbR^n} \quad 
\lVert x \rVert & :
\\
-A x + b \tau^0 &= 0,
\\
-G x + h \tau^0 - s^0 &= 0,
\label{eq:init:x}
\end{align}
\end{subequations}
and we let $y^0$ be the solution of:
\begin{subequations}
\begin{align}
\textstyle\min_{y \in \bbR^p} \quad 
\lVert y \rVert & :
\\
A' y + G' z^0 + c \tau^0 &= 0.
\label{eq:init:y}
\end{align}
\end{subequations}
In \cref{sec:linalg}, we outline a QR-factorization-based procedure for preprocessing the affine data of the conic model and solving for $\omega^0$. 

Like \citet[Section 4.1]{skajaa2015homogeneous}, we define the \emph{complementarity gap} function:
\begin{equation}
\mu(\omega) \coloneqq 
\bs' \bz / \tsum{k \in \iin{\bKn}} \nu_k,
\label{eq:mu}
\end{equation}
where $\nu_k$ is the LHSCB parameter of the LHSCB $f_k$ for $\bK_k$ (see \cref{eq:lhscb:lh}).
Note that $\mu(\omega) > 0$ if $(\bz, \bs) \in \bK^\ast \times \bK$, by a strict version of the dual cone inequality \cref{eq:Kdual}.
From \cref{eq:initsz}, $\mu(\omega^0) = 1$, since in \cref{eq:mu} we have $(\bs^0)' \bz^0 = \tsum{k \in \iin{\bKn}} t_k' (-g_k (t_k))$, and $t_k' (-g_k (t_k)) = \nu_k$ by logarithmic homogeneity of $f_k$ \citep[Proposition 2.3.4]{nesterov1994interior}.
Hence $\omega^0$ satisfies the central path conditions \cref{eq:cp} for parameter value $\mu = 1$.
The central path is therefore a trajectory that starts at $\omega^0$ with complementarity gap $\mu = 1$ and approaches a solution for the HSDE as $\mu$ decreases to zero.

\subsection{Central path proximity}
\label{sec:alg:prox}

Given a point $\omega$, we define the central path \emph{proximity} $\pi_k$ for exotic cone $k \in \iin{\bKn}$ as:
\begin{equation}
\pi_k(\omega) \coloneqq \begin{cases}
\big\lVert (H_k (\bs_k))^{-1 / 2} ( \bz_k / \mu(\omega) + g_k(\bs_k) ) \big\rVert
& \text{if } \mu(\omega) > 0, \bs_k \in \intr \bigl( \bK_k \bigr),
\\
\infty & \text{otherwise}.
\end{cases}
\label{eq:proxhessk}
\end{equation}
Hence $\pi_k$ is a measure of the distance from $\bs_k$ and $\bz_k$ to the surface defined by the central path condition \cref{eq:cp:psi} (compare to \citet[Equation 9]{skajaa2015homogeneous} and \citet[Section 4]{nesterov1998primal}).

In \cref{lem:interior}, we show that for exotic cone $k \in \iin{\bKn}$, if $\pi_k(\omega) < 1$, then $\bs_k \in \intr \bigl( \bK_k \bigr)$ and $\bz_k \in \intr \bigl( \bK^\ast_k \bigr)$.
This condition is sufficient but not necessary for strict cone feasibility.
If it holds for all $k \in \iin{\bKn}$, then $\omega$ is an interior point (by definition) and \cref{eq:cp:K} is satisfied.
From \cref{eq:proxhessk}, $\pi_k(\omega)$ can be computed by evaluating the feasibility check, gradient, and Hessian oracles for $\bK_k$ at $\bs_k$.

\begin{lemma}
Given a point $\omega$, for each $k \in \iin{\bKn}$, $\pi_k(\omega) < 1$ implies $\bs_k \in \intr \bigl( \bK_k \bigr)$ and $\bz_k \in \intr \bigl( \bK^\ast_k \bigr)$.
\label{lem:interior}
\end{lemma}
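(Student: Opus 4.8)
The plan is to fix $k \in \iin{\bKn}$, assume $\mu(\omega) > 0$ and $\bs_k \in \intr(\bK_k)$ (otherwise $\pi_k(\omega) = \infty \not< 1$ and there is nothing to prove), and exploit the standard self-concordance machinery of \citet{nesterov1994interior}. Write $s \coloneqq \bs_k$, $z \coloneqq \bz_k$, $H \coloneqq H_k(s) = \nabla^2 f_k(s)$, $g \coloneqq g_k(s)$, and $\mu \coloneqq \mu(\omega)$. By hypothesis $\pi_k(\omega) = \lVert H^{-1/2}(z/\mu + g) \rVert < 1$, i.e.\ the local norm $\lVert z/\mu + g \rVert_s^\ast < 1$, where $\lVert \cdot \rVert_s^\ast$ is the dual local norm at $s$. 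Equivalently, $z/\mu$ lies in the open Dikin ellipsoid of radius $1$ centered at $g = \nabla f_k(s)$ in the $H$-inner product.

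The first key step is the membership $s \in \intr(\bK_k)$: this is immediate from the hypothesis, so the real content is showing $z \in \intr(\bK_k^\ast)$. For this I would invoke the fact that for an LHSCB $f_k$ with conjugate $f_k^\ast$, the gradient map $-\nabla f_k$ is a bijection from $\intr(\bK_k)$ onto $\intr(\bK_k^\ast)$, and moreover the open dual-norm unit ball around $-g(s) = -\nabla f_k(s)$ is contained in $\intr(\bK_k^\ast)$ — precisely, $\{ w : \lVert w - (-g(s)) \rVert_s^\ast < 1 \} \subseteq \intr(\bK_k^\ast)$. This is the ``dual Dikin ellipsoid'' property; it follows from the primal statement that $\{ u : \lVert u - s \rVert_s < 1 \} \subseteq \intr(\bK_k)$ (Theorem 2.1.1 / Dikin ellipsoid in \citet{nesterov1994interior}) together with the duality relation $\nabla^2 f_k^\ast(-g(s)) = H^{-1}$ and $\nabla f_k^\ast(-g(s)) = -s$. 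Applying this with $w = z/\mu$: since $\lVert z/\mu - (-g) \rVert_s^\ast = \pi_k(\omega) < 1$, we get $z/\mu \in \intr(\bK_k^\ast)$, and since $\bK_k^\ast$ is a cone and $\mu > 0$, also $z \in \intr(\bK_k^\ast)$.

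The main obstacle — or rather the point requiring care — is justifying the dual Dikin ellipsoid inclusion in a self-contained way, since the excerpt only cites \citet{nesterov1994interior,renegar2001mathematical} at the level of basic LHSCB facts. The cleanest route is: (i) recall $f_k^\ast$ is a $\nu_k$-LHSCB for $\bK_k^\ast$ (stated after \cref{eq:conj}); (ii) recall the conjugacy identities $\nabla f_k^\ast(z') = -(\nabla f_k)^{-1}(-z')$ and $\nabla^2 f_k^\ast(z') = (\nabla^2 f_k((\nabla f_k)^{-1}(-z')))^{-1}$, valid on $\intr(\bK_k^\ast)$; (iii) apply the primal Dikin ellipsoid property to $f_k^\ast$ at the point $z'_0 \coloneqq -g(s) \in \intr(\bK_k^\ast)$, whose Hessian there is exactly $H^{-1}$, so the ellipsoid $\{ w : \lVert w - z'_0 \rVert_{H^{-1}}... \}$ — wait, the relevant norm is $\lVert w - z'_0 \rVert_{\nabla^2 f_k^\ast(z'_0)} = \lVert w - z'_0 \rVert_{H^{-1}} = \lVert H^{-1/2}(w - z'_0) \rVert$, which is precisely $\pi_k$ when $w = z/\mu$. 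Thus the whole argument reduces to one application of the standard Dikin ellipsoid theorem to the conjugate barrier. I would present it in roughly this order: reduce to the nontrivial case; state the two conjugacy identities; state the Dikin ellipsoid property for $f_k^\ast$ at $-g(s)$; identify $\pi_k(\omega)$ with the local-norm distance of $z/\mu$ from $-g(s)$; conclude $z/\mu \in \intr(\bK_k^\ast)$ and homogenize to get $z \in \intr(\bK_k^\ast)$; note $s \in \intr(\bK_k)$ holds by hypothesis.
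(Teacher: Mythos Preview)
Your proposal is correct and follows essentially the same approach as the paper's proof: both reduce to the nontrivial case via the definition of $\pi_k$, invoke the conjugacy identity $\nabla^2 f_k^\ast(-g_k(\bs_k)) = (H_k(\bs_k))^{-1}$, and then apply the Dikin ellipsoid property to the conjugate barrier $f_k^\ast$ at the point $-g_k(\bs_k)$ to conclude $\bz_k/\mu \in \intr(\bK_k^\ast)$. The paper cites \citet{papp2017homogeneous} for these facts while you derive them from \citet{nesterov1994interior}, but the argument is the same.
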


\begin{proof}
We adapt \citet[Lemma 15]{papp2017homogeneous}.
Fix $\mu = \mu(\omega)$ for convenience, and suppose $\pi_k(\omega) < 1$ for exotic cone $k \in \iin{\bKn}$.
Then by \cref{eq:proxhessk}, $\mu > 0$ and $\bs_k \in \intr \bigl( \bK_k \bigr)$.
By \citet[Theorem 8]{papp2017homogeneous}, $\bs_k \in \intr \bigl( \bK_k \bigr)$ implies $-g_k (\bs_k) \in \intr \bigl( \bK^\ast_k \bigr)$.
Let $f_k$ be the LHSCB for $\bK_k$, and let $H^\ast_k \coloneqq \nabla^2 f^\ast_k$ denote the Hessian operator for the conjugate $f^\ast_k$ (see \cref{eq:conj}) of $f_k$.
By \citet[Equation 13]{papp2017homogeneous}, $H^\ast_k (-g_k (\bs_k)) = (H_k (\bs_k))^{-1}$, so:
\begin{subequations}
\begin{align}
& \big\lVert ( H^\ast_k (-g_k (\bs_k)) )^{1 / 2} ( \bz_k / \mu + g_k (\bs_k) ) \big\rVert
\\
&= \big\lVert (H_k (\bs_k))^{-1 / 2} ( \bz_k / \mu + g_k (\bs_k) ) \big\rVert
\\
&= \pi_k (\omega) < 1.
\label{eq:interior}
\end{align}
\end{subequations}
So by \citet[Definition 1]{papp2017homogeneous}, $\bz_k / \mu \in \intr \bigl( \bK^\ast_k \bigr)$, hence $\bz_k \in \intr \bigl( \bK^\ast_k \bigr)$.
\end{proof}

We now define a proximity function that aggregates the exotic cone central path proximity values $\pi_k(\omega) \geq 0, \forall k \in \iin{\bKn}$.
SY aggregates by taking the $\ell_2$ norm:
\begin{equation}
\pitwo (\omega) \coloneqq 
\big\lVert ( \pi_k(\omega) )_{k \in \iin{\bKn}} \big\rVert.
\label{eq:proxagg:l2}
\end{equation}
An alternative aggregated proximity uses the $\ell_{\infty}$ norm (maximum):
\begin{equation}
\piinf (\omega) \coloneqq 
\big\lVert ( \pi_k(\omega) )_{k \in \iin{\bKn}} \big\rVert_{\infty}.
\label{eq:proxagg:linf}
\end{equation}
Clearly, $0 \leq \pi_k (\omega) \leq \piinf (\omega) \leq \pitwo (\omega), \forall k \in \iin{\bKn}$.
Both conditions $\pitwo (\omega) < 1$ and $\piinf (\omega) < 1$ guarantee by \cref{lem:interior} that $\omega$ is an interior point, however using $\pitwo$ leads to a more restrictive condition on $\omega$.

\subsection{High level algorithm}
\label{sec:alg:alg}

We describe a high level algorithm for approximately solving the HSDE.
The method starts at the initial interior point $\omega^0$ with complementarity gap $\mu(\omega^0) = 1$ and approximately tracks the central path trajectory \cref{eq:cp} through a series of iterations.
It maintains feasibility for the linear equality conditions \cref{eq:cp:eq} and strict cone feasibility conditions \cref{eq:cp:K}, but allows violation of the nonlinear equality conditions \cref{eq:cp:psi}.
On the $i$th iteration, the current interior point is $\omega^{i-1}$ satisfying $\pi_k(\omega^{i-1}) < 1, \forall k \in \iin{\bKn}$, and the complementarity gap is $\mu(\omega^{i-1})$.
The method searches for a new point $\omega^i$ that maintains the proximity condition $\pi_k(\omega^i) < 1, \forall k \in \iin{\bKn}$ (and hence is an interior point) and either has a smaller complementarity gap $\mu(\omega^i) < \mu(\omega^{i-1})$ or a smaller aggregate proximity value $\pi(\omega^i) < \pi(\omega^{i-1})$ (where $\pi$ is $\pitwo$ or $\piinf$), or both.
As the complementarity gap decreases towards zero, the RHS of \cref{eq:cp:eq} approaches the origin, so the iterates approach a solution of the HSDE \cref{eq:hsde}.

To detect an approximate conic certificate and terminate the iterations, we check if the current iterate $\omega$ satisfies any of the following numerical convergence criteria.
These conditions use positive tolerance values for feasibility $\varepsilon_f$, infeasibility $\varepsilon_i$, absolute gap $\varepsilon_a$, relative gap $\varepsilon_r$, and ill-posedness $\varepsilon_p$ (see \cref{sec:testing:method} for the tolerance values we use in computational testing).
\begin{description}
\item[Optimality.]
\label{eq:optimality}
We terminate with a complementary solution $(x, y, z) / \tau$ approximately satisfying the primal-dual optimality conditions \cref{eq:prim:eq,eq:prim:K,eq:dual:eq,eq:dual:K,eq:compl} if:
\begin{subequations}
\begin{equation}
\max \biggl( 
\frac{ \lVert A' y + G' z + c \tau \rVert_{\infty} }{ 1 + \lVert c \rVert_{\infty} }, 
\frac{ \lVert -A x + b \tau \rVert_{\infty} }{ 1 + \lVert b \rVert_{\infty} }, 
\frac{ \lVert -G x + h \tau - s \rVert_{\infty} }{ 1 + \lVert h \rVert_{\infty} } 
\biggr) \leq \varepsilon_f \tau,
\label{eq:term:opt:res}
\end{equation}
and at least one of the following two conditions holds: 
\begin{align}
s' z &\leq \varepsilon_a,
\label{eq:term:opt:absgap}
\\
\min ( 
s' z / \tau, \lvert c' x + b' y + h' z \rvert 
) 
&\leq \varepsilon_r \max ( 
\tau, \min ( \lvert c' x \rvert, \lvert b' y + h' z \rvert ) 
).
\label{eq:term:opt:relgap}
\end{align}
\label{eq:term:opt}
\end{subequations}
Note that \cref{eq:term:opt:absgap,eq:term:opt:relgap} are absolute and relative optimality gap conditions respectively.
\item[Primal infeasibility.]
We terminate with a dual improving ray $(y, z)$ approximately satisfying \cref{eq:priminf} if: 
\begin{equation}
b' y + h' z < 0,
\qquad
\lVert A' y + G' z \rVert_{\infty} 
\leq -\varepsilon_i ( b' y + h' z ).
\label{eq:term:priminf}
\end{equation}
\item[Dual infeasibility.]
We terminate with a primal improving ray $x$ approximately satisfying \cref{eq:dualinf} if: 
\begin{equation}
c' x < 0, 
\qquad 
\max ( 
\lVert A x \rVert_{\infty}, 
\lVert G x + s \rVert_{\infty} 
) \leq -\varepsilon_i c' x.
\label{eq:term:dualinf}
\end{equation}
\item[Ill-posed primal or dual.]
If $\tau$ and $\kappa$ are approximately $0$, the primal and dual problem statuses cannot be determined (see \cref{sec:form:hsde}).
We terminate with an ill-posed status if:
\begin{equation}
\mu(\omega) \leq \varepsilon_p, \qquad 
\tau \leq \varepsilon_p \min (1, \kappa).
\label{eq:term:ill}
\end{equation}
\end{description}

The high level path following algorithm below computes an approximate solution to the HSDE.
In \cref{sec:alg:step}, we describe specific stepping procedures for \cref{line:step}.
\begin{algorithmic}[1]
\Procedure {SolveHSDE}{}
\State compute initial interior point $\omega^0$
\State $i \gets 1$
\While {$\omega^{i-1}$ does not satisfy any of the convergence conditions \crefrange{eq:term:opt}{eq:term:ill}} 
\State $\omega^i \gets $ \Call{Step}{$\omega^{i-1}$}
\label{line:step}
\State $i \gets i + 1$
\EndWhile
\State \Return $\omega^i$
\EndProcedure
\label{proc:solve}
\end{algorithmic}

\subsection{Search directions}
\label{sec:alg:dir}

At a given iteration of the path following method, let $\omega$ be the current interior point and fix $\mu = \mu(\omega)$ for convenience.
The stepping procedures we describe in \cref{sec:alg:step} first compute one or more search directions, which depend on $\omega$.
We derive the \emph{centering} direction in \cref{sec:alg:dir:cent} and the \emph{prediction} direction in \cref{sec:alg:dir:pred}.
The goal of centering is to step to a point with a smaller aggregate central path proximity than the current point, i.e.\ to step towards the central path.
The goal of prediction is to step to a point with a smaller complementarity gap, i.e.\ to step closer to a solution of the HSDE.
The centering and prediction directions match those used by SY.
We associate with each of these directions a new \emph{third order adjustment} (TOA) direction, which depends on the TOO and helps to correct the corresponding unadjusted direction (which must be computed before the TOA direction).
Hence we derive four types of directions here.

Each direction is computed as the solution to a linear system with a structured square $6 \times 6$ block matrix left hand side (LHS) and a particular right hand side (RHS) vector.
The LHS, which depends only on $\omega$ and the problem data, is the same for all four directions at a given iteration.
We let $r \coloneqq (r_E, r_1, \ldots, r_{\bKn}) \in \bbR^{\dim(\omega)}$ represent an RHS, where $r_E \in \bbR^{n + p + q + 1}$ corresponds to the linear equalities \cref{eq:cp:eq} and $r_k \in \bbR^{q_k}, \forall k \in \iin{\bKn}$ corresponds to the nonlinear equalities \cref{eq:cp:psi}.
The direction $\delta \coloneqq (\delta_x, \delta_y, \delta_z, \delta_\tau, \delta_s, \delta_\kappa) \in \bbR^{\dim(\omega)}$ corresponding to $r$ is the solution to:
\begin{subequations}
\begin{align}
E \delta &= r_E,
\label{eq:dirs:eq}
\\
\delta_{\bz,k} + \mu H_k (\bs_k) \delta_{\bs,k} 
&= r_k
\quad \forall k \in \iin{\bKn}.
\label{eq:dirs:K}
\end{align}
\label{eq:dirs}
\end{subequations}
Since $E$ is assumed to have full row rank and each $H_k$ is positive definite, this square system is nonsingular and hence has a unique solution.
In \cref{sec:linalg}, we describe a particular method for solving \cref{eq:dirs}.

\subsubsection{Centering}
\label{sec:alg:dir:cent}

The centering direction $\delta^c$ is analogous to the definition of \citet[Section 3.2]{skajaa2015homogeneous}.
It reduces the violation on the central path nonlinear equality condition \cref{eq:cp:psi} (and can be interpreted as a Newton step), while keeping the complementarity gap $\mu$ (approximately) constant.
We denote the centering TOA direction $\delta^{ct}$.
To maintain feasibility for the linear equality condition \cref{eq:cp:eq}, we ensure $E \delta^c = E \delta^{ct} = 0$ in \cref{eq:dirs:eq}.

Dropping the index $k \in \iin{\bKn}$ for conciseness, recall that \cref{eq:cp:psi} expresses $\bz + \mu g(\bs) = 0$.
A first order approximation of this condition gives:
\begin{subequations}
\begin{align}
\bz + \delta_{\bz} + 
\mu ( g(\bs) + H(\bs) \delta_{\bs} )
&= 0
\\
\Rightarrow \quad 
\delta_{\bz} + \mu H(\bs) \delta_{\bs} 
&= -\bz - \mu g(\bs),
\label{eq:cent1}
\end{align}
\end{subequations}
which matches the form of \cref{eq:dirs:K}.
Hence we let the centering direction $\delta^c$ be the solution to:
\begin{subequations}
\begin{align}
E \delta &= 0,
\\
\delta_{\bz,k} + \mu H_k (\bs_k) \delta_{\bs,k} 
&= -\bz_k - \mu g_k (\bs_k) 
\quad \forall k \in \iin{\bKn}.
\label{eq:centsys:K}
\end{align}
\label{eq:centsys}
\end{subequations}
Similarly, a second order approximation of $\bz + \mu g(\bs) = 0$ gives:
\begin{subequations}
\begin{align}
\bz + \delta_{\bz} + \mu \bigl( 
g(\bs) + H(\bs) \delta_{\bs} + 
\tfrac{1}{2} \nabla^3 f(\bs) [\delta_{\bs}, \delta_{\bs}] 
\bigr)
&= 0
\\
\Rightarrow \quad
\delta_{\bz} + \mu H(\bs) \delta_{\bs} 
&= -\bz - \mu g(\bs) + \mu \Tau (\bs, \delta_{\bs}),
\label{eq:cent2}
\end{align}
\end{subequations}
where \cref{eq:cent2} uses the definition of the TOO in \cref{eq:too}.
Note that the RHSs of \cref{eq:cent1,eq:cent2} differ only by $\mu \Tau (\bs, \delta_{\bs})$, which depends on $\delta_{\bs}$.
To remove this dependency, we substitute the centering direction $\delta^c$, which we assume is already computed, into the RHS of \cref{eq:cent2}.
Hence we let the centering TOA direction $\delta^{ct}$, which adjusts the centering direction, be the solution to:
\begin{subequations}
\begin{align}
E \delta &= 0,
\\
\delta_{\bz,k} + \mu H_k (\bs_k) \delta_{\bs,k} 
&= \mu \Tau_k \bigl( \bs_k, \delta^c_{\bs,k} \bigr)
\quad \forall k \in \iin{\bKn}.
\label{eq:centtoasys:K}
\end{align}
\label{eq:centtoasys}
\end{subequations}
We note that for a rescaling factor $\alpha \in (0,1)$, the TOA direction corresponding to $\alpha \delta^c$ (a rescaling of the centering direction) is $\alpha^2 \delta^{ct}$ (a rescaling of the centering TOA direction).

\subsubsection{Prediction}
\label{sec:alg:dir:pred}

The prediction direction $\delta^p$ reduces the complementarity gap and is analogous to the definition of \citet[Section 3.1]{skajaa2015homogeneous}.
We derive $\delta^p$ and its corresponding TOA direction $\delta^{pt}$ by considering the central path conditions \cref{eq:cp} as a dynamical system parametrized by $\mu > 0$, and differentiating the linear and nonlinear equalities \cref{eq:cp:eq,eq:cp:psi}.

Differentiating \cref{eq:cp:eq} once gives:
\begin{equation}
E \dot{\omega}_\mu = E \omega^0.
\label{eq:predE1}
\end{equation}
Rescaling \cref{eq:predE1} by $-\mu$ and substituting \cref{eq:cp:eq} gives:
\begin{equation}
E (-\mu \dot{\omega}_\mu) 
= -\mu E \omega^0
= -E \omega_\mu.
\label{eq:predE1b}
\end{equation}
Dropping the index $k \in \iin{\bKn}$ for conciseness, we differentiate $\bz_{\mu} + \mu g(\bs_{\mu}) = 0$ from \cref{eq:cp:psi} once to get:
\begin{equation}
\dot{\bz}_{\mu} + g(\bs_{\mu}) + \mu H(\bs_{\mu}) \dot{\bs}_{\mu} 
= 0.
\label{eq:predK1}
\end{equation}
Rescaling \cref{eq:predK1} by $-\mu$ and substituting $\bz_{\mu} = -\mu g(\bs_{\mu})$ from \cref{eq:cp:psi} gives:
\begin{equation}
{-\mu} \dot{\bz}_{\mu} + \mu H(\bs_{\mu}) (-\mu \dot{\bs}_{\mu}) 
= -\bz_\mu.
\label{eq:predK1b}
\end{equation}
The direction $\dot{\omega}_\mu$ is tangent to the central path.
Like SY, we interpret the prediction direction as $\delta^p = -\mu \dot{\omega}_{\mu}$, so \cref{eq:predE1b,eq:predK1b} become:
\begin{subequations}
\begin{align}
E \delta^p
&= -E \omega_\mu,
\\
\delta^p_{\bz} + \mu H(\bs_{\mu}) \delta^p_{\bs} 
&= -\bz_\mu,
\end{align}
\label{eq:predall}
\end{subequations}
which matches the form \cref{eq:dirs}.
So we let $\delta^p$ be the solution to:
\begin{subequations}
\begin{align}
E \delta &= -E \omega,
\\
\delta_{\bz,k} + \mu H_k (\bs_k) \delta_{\bs,k} 
&= -\bz_k
\quad \forall k \in \iin{\bKn}.
\label{eq:predsys:K}
\end{align}
\label{eq:predsys}
\end{subequations}
Differentiating \cref{eq:cp:eq} twice and rescaling by $\frac{1}{2} \mu^2$ gives:
\begin{equation}
E \bigl( \tfrac{1}{2} \mu^2 \ddot{\omega}_\mu \bigr) 
= 0.
\label{eq:predE2}
\end{equation}
Differentiating $\bz_{\mu} + \mu g(\bs_{\mu}) = 0$ twice gives:
\begin{equation}
\ddot{\bz}_{\mu} + 2 H(\bs_{\mu}) \dot{\bs}_{\mu} + 
\mu \nabla^3 f(\bs_{\mu}) [\dot{\bs}_{\mu}, \dot{\bs}_{\mu}] + 
\mu H(\bs_{\mu}) \ddot{\bs}_{\mu} 
= 0.
\label{eq:predK2}
\end{equation}
Rescaling \cref{eq:predK2} by $\frac{1}{2} \mu^2$ and substituting the TOO definition \cref{eq:too}, we have:
\begin{subequations}
\begin{align}
\tfrac{1}{2} \mu^2 \ddot{\bz}_{\mu} + 
\mu H(\bs_{\mu}) \bigl( \tfrac{1}{2} \mu^2 \ddot{\bs}_{\mu} \bigr) 
&= \mu H(\bs_{\mu}) (-\mu \dot{\bs}_{\mu}) - 
\tfrac{1}{2} \mu \nabla^3 f(\bs_{\mu}) [-\mu \dot{\bs}_{\mu}, -\mu \dot{\bs}_{\mu}]
\\
&= \mu H(\bs_{\mu}) (-\mu \dot{\bs}_{\mu}) + 
\mu \Tau(\bs_{\mu}, -\mu \dot{\bs}_{\mu}).
\label{eq:predK2b}
\end{align}
\end{subequations}
We interpret the prediction TOA direction, which adjusts the prediction direction, as $\delta^{pt} = \frac{1}{2} \mu^2 \ddot{\omega}$.
The RHS of \cref{eq:predK2b} depends on $\dot{\bs}_{\mu}$, so we remove this dependency by substituting the prediction direction $\delta^p = -\mu \dot{\omega}_{\mu}$, which we assume is already computed.
Hence using \cref{eq:predE2,eq:predK2b}, we let $\delta^{pt}$ be the solution to:
\begin{subequations}
\begin{align}
E \delta &= 0,
\\
\delta_{\bz,k} + \mu H_k (\bs_k) \delta_{\bs,k} 
&= \mu H_k(\bs_k) \delta^p_{\bs,k} + 
\mu \Tau_k \bigl( \bs_k, \delta^p_{\bs,k} \bigr) 
\quad \forall k \in \iin{\bKn}.
\label{eq:predtoasys:K}
\end{align}
\label{eq:predtoasys}
\end{subequations}
We note that the RHS in \cref{eq:predtoasys:K} differs from the `higher order corrector' RHS proposed by \citet[Equation 16]{dahl2021primal}, which has the form $\frac{1}{2} \nabla^3 f_k \bigl[ \delta_{\bs,k}^p, (H_k(\bs_k))^{-1} \delta_{\bz,k}^p \bigr]$.
For example, our form does not satisfy all of the properties in \citet[Lemmas 3 and 4]{dahl2021primal}.

\subsection{Stepping procedures}
\label{sec:alg:step}

A stepping procedure computes one or more directions from \cref{sec:alg:dir} and uses the directions to search for a new interior point.
Recall from \cref{line:step} of the high level PDIPM in \cref{sec:alg:alg} that on iteration $i$ with current iterate $\omega^{i-1}$, \Call{Step}{} computes $\omega^i$ satisfying $\pi(\omega^i) < 1$ and either $\mu(\omega^i) < \mu(\omega^{i-1})$ (prediction) or $\pi(\omega^i) < \pi(\omega^{i-1})$ (centering) or both.
In \cref{sec:alg:step:basic}, we describe a stepping procedure similar to that of Alfonso \citep{papp2021alfonso}, which is a practical implementation of SY.
This procedure, which we call \emph{basic}, alternates between prediction and centering steps and does not use the TOA directions.
In \crefrange{sec:alg:step:prox}{sec:alg:step:comb}, we describe a sequence of four cumulative enhancements to the \emph{basic} procedure, with the goal of improving iteration counts and per-iteration computational efficiency in practice.
The main purpose of our computational testing in \cref{sec:testing} is to assess the value of these enhancements on a diverse set of benchmark instances.

\subsubsection{Basic stepping procedure}
\label{sec:alg:step:basic}

First, we decide whether to perform a centering step or a prediction step.
If the current iterate $\omega^{i-1}$ (at the $i$th iteration) is very close to the central path, i.e.\ if the sum proximity \cref{eq:proxagg:l2} does not exceed $\eta = 0.0332$ (from Alfonso \citep{papp2020alfonso}), or if the most recent $N = 4$ steps have all been centering steps, then we compute the prediction direction $\delta^p$ from \cref{eq:predsys}.
Otherwise, we compute the centering direction $\delta^c$ from \cref{eq:centsys}.
Letting $j$ be the number of consecutive centering steps taken immediately before the current $i$th iteration, the search direction is:
\begin{equation}
\delta \coloneqq \begin{cases}
\delta^p & \text{if $\pitwo(\omega^{i-1}) \leq \eta$ or $j \geq N$},
\\
\delta^c & \text{otherwise}.
\end{cases}
\label{eq:basic:delta}
\end{equation}

Next, we perform a backtracking line search in the direction $\delta$.
The search finds a step length $\hat{\alpha} \in (0, 1)$ from a fixed schedule of decreasing values $\mathcal{A} = \{ \alpha_l \}_{l \in \iin{L}}$, where $L = 18$, $\alpha_1 = 0.9999$, and $\alpha_L = 0.0005$.
The next iterate $\omega^i = \omega^{i-1} + \hat{\alpha} \delta$ becomes the first point in the backtracking line search that satisfies $\pitwo(\omega^i) \leq \beta_1$ for $\beta_1 = 0.2844$ (from Alfonso \citep{papp2020alfonso}), which guarantees interiority by \cref{lem:interior}.
If the backtracking search terminates without a step length satisfying the proximity condition (i.e.\ $\alpha_L$ is too large), the PDIPM algorithm terminates without a solution. 
In \cref{sec:search} we discuss our implementation of the proximity check that we run for each candidate point in the backtracking search.

The \emph{basic} stepping procedure is summarized as follows.
Note the centering step count $j$ is initialized to zero before the first iteration $i = 1$.
Since $\omega^0$ is exactly on the central path (i.e.\ the proximity is zero), the first iteration uses a prediction step.
\begin{algorithmic}[1]
\Procedure {BasicStep}{$\omega^{i-1}$, $j$}
\If {$\pitwo(\omega^{i-1}) \leq \eta$ or $j \geq N$}
\Comment{choose predict or center}
\State $\delta \gets \delta^p$ from \cref{eq:predsys}
\Comment{compute prediction direction}
\State $j \gets 0$
\Else
\State $\delta \gets \delta^c$ from \cref{eq:centsys}
\Comment{compute centering direction}
\State $j \gets j + 1$
\EndIf
\State $\hat{\alpha} \gets \max \{ \alpha \in \mathcal{A} : \pitwo ( \omega^{i-1} + \alpha \delta ) \leq \beta_1 \bigr\}$ 
\Comment{compute step length by backtracking search}
\State $\omega^i \gets \omega^{i-1} + \hat{\alpha} \delta$
\Comment{update current iterate}
\EndProcedure
\end{algorithmic}

\subsubsection{Less restrictive proximity}
\label{sec:alg:step:prox}

The \emph{basic} stepping procedure in \cref{sec:alg:step:basic} requires iterates to remain in close proximity to the central path and usually only takes prediction steps from iterates that are very close to the central path.
Although conservative proximity conditions are used to prove polynomial iteration complexity in \citet{papp2017homogeneous}, they may be too restrictive from the perspective of practical performance.
To allow prediction steps from a larger neighborhood of the central path, we use the $\piinf$ proximity measure from \cref{eq:proxagg:linf} instead of $\pitwo$ to compute the proximity of $\omega^{i-1}$, though we do not change the proximity bound $\eta$.
To allow longer step lengths, we also use $\piinf$ instead of $\pitwo$ for the backtracking search proximity checks, and we increase this proximity bound to $\beta_2 = 0.99$ (by \cref{lem:interior}, $\beta_2 < 1$ guarantees interiority).

The \emph{prox} stepping procedure, which enhances the \emph{basic} stepping procedure by relaxing the proximity conditions somewhat, is summarized as follows.
\begin{algorithmic}[1]
\Procedure {ProxStep}{$\omega^{i-1}$, $j$}
\If {$\piinf(\omega^{i-1}) \leq \eta$ or $j \geq N$}
\Comment{use less restrictive proximity measure $\piinf$}
\State $\delta \gets \delta^p$ from \cref{eq:predsys}
\State $j \gets 0$
\Else
\State $\delta \gets \delta^c$ from \cref{eq:centsys}
\State $j \gets j + 1$
\EndIf
\State $\hat{\alpha} \gets \max \{ \alpha \in \mathcal{A} : \piinf ( \omega^{i-1} + \alpha \delta ) \leq \beta_2 \}$
\Comment{use $\piinf$ and larger proximity bound $\beta_2$}
\State $\omega^i \gets \omega^{i-1} + \hat{\alpha} \delta$
\EndProcedure
\end{algorithmic}

\subsubsection{Third order adjustments}
\label{sec:alg:step:toa}

We modify the \emph{prox} stepping procedure in \cref{sec:alg:step:prox} to incorporate the new TOA directions associated with the prediction and centering directions.
After deciding whether to predict or center (using the same criteria as \emph{prox}), we compute the unadjusted direction $\delta^u$ (i.e.\ $\delta^p$ or $\delta^c$) and its associated TOA direction $\delta^t$ (i.e.\ $\delta^{pt}$ or $\delta^{ct}$).
We perform a backtracking line search in direction $\delta^u$, just like \emph{prox}, and we use this step length $\hat{\alpha}^u \in (0,1)$ to scale down the TOA direction.
We let the final direction be $\delta^u + \hat{\alpha}^u \delta^t$.
The rescaling of $\delta^t$ helps to prevent over-adjustment.
Finally, we perform a second backtracking line search, using the same techniques and proximity condition as the first line search.

The \emph{TOA} stepping procedure, which enhances the \emph{prox} stepping procedure by incorporating the TOA directions, is summarized as follows.
\begin{algorithmic}[1]
\Procedure {TOAStep}{$\omega^{i-1}$, $j$}
\If {$\piinf(\omega^{i-1}) \leq \eta$ or $j \geq N$}
\State $\delta^u \gets \delta^p$ from \cref{eq:predsys}
\State $\delta^t \gets \delta^{pt}$ from \cref{eq:predtoasys}
\Comment{compute prediction TOA direction}
\State $j \gets 0$
\Else
\State $\delta^u \gets \delta^c$ from \cref{eq:centsys}
\State $\delta^t \gets \delta^{ct}$ from \cref{eq:centtoasys}
\Comment{compute centering TOA direction}
\State $j \gets j + 1$
\EndIf
\State $\hat{\alpha}^u \gets \max \{ \alpha \in \mathcal{A} : \piinf ( \omega^{i-1} + \alpha \delta^u ) \leq \beta_2 \}$
\label{line:toa:alpha1}
\Comment{perform line search for unadjusted direction}
\State $\delta \gets \delta^u + \hat{\alpha}^u \delta^t$
\label{line:toa:delta}
\Comment{compute final direction}
\State $\hat{\alpha} \gets \max \{ \alpha \in \mathcal{A} : \piinf ( \omega^{i-1} + \alpha \delta ) \leq \beta_2 \}$
\label{line:toa:alpha}
\State $\omega^i \gets \omega^{i-1} + \hat{\alpha} \delta$
\EndProcedure
\end{algorithmic}

\subsubsection{Curve search}
\label{sec:alg:step:curve}

The \emph{TOA} stepping procedure in \cref{sec:alg:step:toa} performs two backtracking line searches, which can be quite expensive. 
We propose using a single backtracking search along a curve that is quadratic in the step parameter $\alpha$ and linear in the unadjusted and TOA directions.
Recall from \cref{line:toa:delta} of the \emph{TOA} procedure that we compute a direction $\delta$ as a linear function of the step parameter from the first line search. 
Substituting this $\delta$ function into the usual linear trajectory gives the curved trajectory $\omega^{i-1} + \alpha ( \delta^u + \alpha \delta^t )$ for $\alpha \in (0,1)$, where $\delta^u$ and $\delta^t$ are the unadjusted and TOA directions (as in the \emph{TOA} procedure).
Intuitively, a backtracking search along this curve achieves a more dynamic rescaling of the TOA direction.

The \emph{curve} stepping procedure, which enhances the \emph{TOA} stepping procedure by using a search on a curve instead of two line searches, is summarized as follows.
\begin{algorithmic}[1]
\Procedure {CurveStep}{$\omega^{i-1}$, $j$}
\If {$\piinf(\omega^{i-1}) \leq \eta$ or $j \geq N$}
\State $\delta^u \gets \delta^p$ from \cref{eq:predsys}
\State $\delta^t \gets \delta^{pt}$ from \cref{eq:predtoasys}
\State $j \gets 0$
\Else
\State $\delta^u \gets \delta^c$ from \cref{eq:centsys}
\State $\delta^t \gets \delta^{ct}$ from \cref{eq:centtoasys}
\State $j \gets j + 1$
\EndIf
\State let $\hat{\omega} (\alpha) 
\coloneqq \omega^{i-1} + \alpha ( \delta^u + \alpha \delta^t )$
\Comment{use curved trajectory}
\label{line:curve:traj}
\State $\hat{\alpha} \gets \max \{ \alpha \in \mathcal{A} : \piinf ( \hat{\omega} (\alpha) ) \leq \beta_2 \}$
\State $\omega^i \gets \hat{\omega} (\hat{\alpha})$
\label{line:curve:omega}
\EndProcedure
\end{algorithmic}

\subsubsection{Combined directions}
\label{sec:alg:step:comb}

Unlike \citet{skajaa2015homogeneous,papp2021alfonso}, most conic PDIPMs combine the prediction and centering phases (e.g.\ \citet{vandenberghe2010cvxopt,dahl2021primal}).
We propose using a single search on a curve that is quadratic in the step parameter $\alpha$ and linear in all four directions $\delta^c, \delta^{ct}, \delta^p, \delta^{pt}$ from \cref{sec:alg:step:toa}.
Intuitively, we can step further in a convex combination of the prediction and centering directions than we can in just the prediction direction.
In practice, a step length of one is usually ideal for the centering phase, so we can imagine performing a backtracking search from the point obtained from a pure prediction step (with step length one) towards the point obtained from a pure centering step, terminating when we are close enough to the centering point to satisfy the proximity condition.
This approach fundamentally differs from the previous procedures we have described because the search trajectory does not finish at the current iterate $\omega^{i-1}$.
If $\hat{\omega}^p (\alpha)$ and $\hat{\omega}^c (\alpha)$ are the prediction and centering curve search trajectories from \cref{line:curve:traj} of the \emph{curve} procedure, then we define the combined trajectory as $\hat{\omega} (\alpha) = \hat{\omega}^p (\alpha) + \hat{\omega}^c (1 - \alpha)$.
Note that $\alpha = 1$ corresponds to a full step in the adjusted prediction direction $\delta^p + \delta^{pt}$, and $\alpha = 0$ corresponds to a full step in the adjusted centering direction $\delta^c + \delta^{ct}$.

The \emph{comb} stepping procedure, which enhances the \emph{curve} stepping procedure by combining the prediction and centering phases, is summarized as follows.
Note that unlike the previous procedures, there is no parameter $j$ counting consecutive centering steps.
Occasionally in practice, the backtracking search on \cref{line:comb:alpha} below fails to find a positive step value, in which case we perform a centering step according to \crefrange{line:curve:traj}{line:curve:omega} of the \emph{curve} procedure.
\begin{algorithmic}[1]
\Procedure {CombStep}{$\omega^{i-1}$}
\State compute $\delta^c, \delta^{ct}, \delta^p, \delta^{pt}$ from \cref{eq:centsys,eq:centtoasys,eq:predsys,eq:predtoasys}
\Comment{use four directions instead of two}
\State let $\hat{\omega} (\alpha) \coloneqq \omega^{i-1} + \alpha ( \delta^p + \alpha \delta^{pt} ) + (1 - \alpha) ( \delta^c + (1 - \alpha) \delta^{ct} )$
\Comment{use combined trajectory}
\State $\hat{\alpha} \gets \max \{ \alpha \in \mathcal{A} : \piinf ( \hat{\omega} (\alpha) ) \leq \beta_2 \}$
\label{line:comb:alpha}
\State $\omega^i \gets \hat{\omega} (\hat{\alpha})$
\label{line:comb:omega}
\EndProcedure
\end{algorithmic}

\section{Oracles for predefined exotic cones}
\label{sec:cones}

Below we list 23 exotic cone types that we have predefined through Hypatia's generic cone interface (see \cref{sec:exotic}).
Each of these cones is represented in the benchmark set of conic instances that we introduce in \cref{sec:testing:bench}.
Recall that we write any exotic cone $\K$ in vectorized form, i.e. as a subset of $\bbR^q$, where $q = \dim(\K) \geq 1$ is the cone dimension.
For cones typically defined using symmetric matrices, we use the standard svec vectorization (see \cref{sec:prelim}) to ensure the vectorized cone is proper, to preserve inner products, and to simplify the dual cone definition.
Each cone is parametrized by at least one dimension and several cones have additional parameters such as numerical data.
For convenience, we drop these parameters from the symbols we use to represent cone types.
For several cones, we have implemented additional variants over complex numbers (for example, a Hermitian PSD cone), but we omit these definitions here for simplicity.
We defer a more complete description of Hypatia's exotic cones and LHSCBs to \citet{coey2021hypatia,coey2021self,kapelevich2021sum}.

\begin{description}
\item[Nonnegative cone.]
$\knn \coloneqq \bbR_{\geq}^d$ is the (self-dual) nonnegative real vectors (note for $d > 1$, $\knn$ is not a primitive cone).
\item[PSD cone.]
$\kpsd \coloneqq \bigl\{ w \in \bbR^{\sdim(d)} : \mat(w) \in \bbS^d_{\succeq} \bigr\}$ is the (self-dual) PSD matrices of side dimension $d$.
\item[Doubly nonnegative cone.]
$\kdnn \coloneqq \knn \cap \kpsd$ is the PSD matrices with all nonnegative entries of side dimension $d$.
\item[Sparse PSD cone.]
$\ksppsd$ is the PSD matrices of side dimension $s$ with a fixed sparsity pattern $\mathcal{S}$ containing $d \geq s$ nonzeros (including all diagonal elements); see \cref{sec:toos:sppsd}.
The dual cone $\ksppsd^\ast$ is the symmetric matrices with pattern $\mathcal{S}$ for which there exists a PSD completion, i.e. an assignment of the elements not in $\mathcal{S}$ such that the full matrix is PSD.
For simplicity, the complexity estimates in \cref{tab:oracles} assume the nonzeros are grouped under $J \geq 1$ supernodes, each containing at most $l$ nodes, and the monotone degree of each node is no greater than a constant $D$ \citep{andersen2013logarithmic}.
\item[Linear matrix inequality cone.]
$\klmi \coloneqq \bigl\{ w \in \bbR^d : \tsum{i \in \iin{d}} w_i P_i \in \bbS^s_{\succeq} \bigr\}$ are the vectors for which the matrix pencil of $d$ matrices $P_i \in \bbS^s, \forall i \in \iin{d}$ is PSD.
We assume $P_1 \succ 0$ so that we can use the initial interior point $e_1$.
\item[Infinity norm cone.]
$\klinf \coloneqq \{ (u, w) \in \bbR_{\geq} \times \bbR^d : u \geq \norm{w}_\infty \}$ is the epigraph of the $\ell_\infty$ norm on $\bbR^d$. 
Similarly, the dual cone $\klinf^\ast$ is the epigraph of the $\ell_1$ norm.
\item[Euclidean norm cone.]
$\kltwo \coloneqq \{ (u, w) \in \bbR_{\geq} \times \bbR^d : u \geq \lVert w \rVert \}$ is the (self-dual) epigraph of the $\ell_2$ norm on $\bbR^d$ (AKA second-order cone).
\item[Euclidean norm square cone.]
$\ksqr \coloneqq \{ (u, v, w) \in \bbR_{\geq} \times \bbR_{\geq} \times \bbR^d : 2 u v \geq \lVert w \rVert^2 \}$ is the (self-dual) epigraph of the perspective of the square of the $\ell_2$ norm on $\bbR^d$ (AKA rotated second-order cone).
\item[Spectral norm cone.]
$\klspec \coloneqq \{ (u, w) \in \bbR_{\geq} \times \bbR^{r s} : u \geq \sigma_1(\mat(w)) \}$, where $\sigma_1$ is the largest singular value function, is the epigraph of the spectral norm on $\bbR^{r \times s}$, assuming $r \leq s$ without loss of generality.
Similarly, $\klspec^\ast$ is the epigraph of the matrix nuclear norm (i.e.\ the sum of singular values).
\item[Matrix square cone.]
$\kmatsqr \coloneqq \bigl\{ (u, v, w) \in \bbR^{\sdim(r)} \times \bbR_{\geq} \times \bbR^{r s} : U \in \bbS^r_{\succeq}, 2 U v \succeq W W' \bigr\}$, where $U \coloneqq \mat(u)$ and $W \coloneqq \mat(w) \in \bbR^{r \times s}$, is the homogenized symmetric matrix epigraph of the symmetric outer product, assuming $r \leq s$ without loss of generality \citep{guler1998characterization}.
\item[Generalized power cone.]
$\kgpower \coloneqq \bigl\{ (u, w) \in \bbR^r_{\geq} \times \bbR^s : \tprod{i \in \iin{r}} u_i^{\alpha_i} \geq \lVert w \rVert \bigr\}$, parametrized by $\alpha \in \bbR^r_>$ with $e' \alpha = 1$, is the generalized power cone \citep[Section 3.1.2]{chares2009cones}.
\item[Power mean cone.]
$\kpower \coloneqq \bigl\{ (u, w) \in \bbR \times \bbR^d_{\geq} : u \leq \tprod{i \in \iin{d}} w_i^{\alpha_i} \bigr\}$, parametrized by exponents $\alpha \in \bbR^d_>$ with $e' \alpha = 1$, is the hypograph of the power mean on $\bbR_{\geq}^d$.
\item[Geometric mean cone.]
$\kgeom$ is the hypograph of the geometric mean on $\bbR_{\geq}^d$, a special case of $\kpower$ with equal exponents.
\item[Root-determinant cone.]
$\krtdet \coloneqq \bigl\{ (u, w) \in \bbR \times \bbR^{\sdim(d)} : W \in \bbS^d_{\succeq}, u \leq (\det(W))^{1/d} \bigr\}$, where $W \coloneqq \mat(w)$, is the hypograph of the $d$th-root-determinant on $\bbS_{\succeq}^d$.
\item[Logarithm cone.]
$\klog \coloneqq \cl \bigl\{ (u, v, w) \in \bbR \times \bbR_> \times \bbR^d_> : u \leq \tsum{i \in \iin{d}} v \log ( w_i / v ) \bigr\}$ is the hypograph of the perspective of the sum of logarithms on $\bbR_>^d$.
\item[Log-determinant cone.]
$\klogdet \coloneqq \cl \bigl\{ (u, v, w) \in \bbR \times \bbR_> \times \bbR^{\sdim(d)} : 
W \in \bbS^d_{\succ}, u \leq v \logdet( W / v ) \bigr\}$, where $W \coloneqq \mat(w)$, is the hypograph of the perspective of the log-determinant on $\bbS_{\succ}^d$.
\item[Separable spectral function cone.]
$\ksepspec \coloneqq \cl \{ (u, v, w) \in \bbR \times \bbR_> \times \intr(\cQ) : u \geq v \varphi (w / v) \}$, where $\cQ$ is $\knn$ or $\kpsd$ (a cone of squares of a Jordan algebra), is the epigraph of the perspective of a convex separable spectral function $\varphi : \intr(\cQ) \to \bbR$, such as the sum or trace of the negative logarithm, negative entropy, or power in $(1, 2]$ (see \citet{coey2021self} for more details).
The complexity estimates in \cref{tab:oracles} depend on whether $\cQ$ is $\knn$ or $\kpsd$.
\item[Relative entropy cone.]
$\krelentr \coloneqq \cl \big\{ (u, v, w) \in \bbR \times \bbR^d_> \times \bbR^d_> : \allowbreak u \geq \tsum{i \in \iin{d}} w_i \log(w_i / v_i) \big\}$ is the epigraph of vector relative entropy.
\item[Matrix relative entropy cone.]
$\K_{\matrelentr} \coloneqq \cl \bigl\{ (u, v, w) \in \bbR \times \bbR^{\sdim(d)} \times \bbR^{\sdim(d)} : V \in \bbS^d_{\succ}, W \in \bbS^d_{\succ}, u \geq \tr(W (\log(W) - \log(V))) \bigr\}$, where $V \coloneqq \mat(v)$ and $W \coloneqq \mat(w)$, is the epigraph of matrix relative entropy.\footnote{
The logarithmically homogeneous barrier for $\K_{\matrelentr}$ that Hypatia uses is conjectured by \citet{karimi2020domaindriven} to be self-concordant.
}
\item[Weighted sum-of-squares (WSOS) cones.]
An interpolant basis represents a polynomial implicitly by its evaluations at a fixed set of $d$ points.
Given a basic semialgebraic domain defined by $r$ polynomial inequalities, the four WSOS cones below are parameterized by matrices $P_l \in \bbR^{d \times s_l}$ for $l \in \iin{r}$.
Each $P_l$ is constructed by evaluating $s_l$ independent polynomials (columns) at the $d$ points (rows), following \citet{papp2019sum}.
For simplicity, the complexity estimates in \cref{tab:oracles} assume $s_l = s, \forall l \in \iin{r}$.
Note that $s < d \leq s^2$.
We define $\kwsos$ and $\kmatwsos$ in \citet{coey2020solving}, and $\klonewsos$ and $\kltwowsos$ in \citet[Equations 2.7 and 4.10]{kapelevich2021sum}. 
\begin{description}
\item[Scalar WSOS cone.]
$\kwsos$ is a cone of polynomials that are guaranteed to be nonnegative pointwise on the domain.
\item[Symmetric matrix WSOS cone.]
$\kmatwsos$ is a cone of polynomial symmetric matrices (in an svec-like format) of side dimension $t$ that are guaranteed to belong to $\bbS_{\succeq}$ pointwise on the domain. 
We let $m \coloneqq s t + d$ in \cref{tab:oracles} for succinctness.
\item[$\ell_1$ epigraph WSOS cone.]
$\klonewsos$ is a cone of polynomial vectors of length $1 + t$ that are guaranteed to belong to $\klinf^\ast$ pointwise on the domain.
\item[$\ell_2$ epigraph WSOS cone.]
$\kltwowsos$ is a cone of polynomial vectors of length $1 + t$ that are guaranteed to belong to $\kltwo$ pointwise on the domain.
\end{description}
\end{description}

For each cone, we have an analytic form for the feasibility check, gradient, Hessian, and TOO oracles defined in \cref{sec:exotic}.
That is, we always avoid iterative numerical procedures such as optimization, which are typically slow, numerically unstable, and require tuning.
Hypatia's algorithm always evaluates the feasibility check before the gradient, Hessian, and TOO (which are only defined at strictly feasible points), and the gradient is evaluated before the Hessian and TOO.
For most of these cones, the feasibility check and gradient oracles compute values and factorizations that are also useful for computing the Hessian and TOO, so this data is cached in the cone data structures and re-used where possible.
In \cref{tab:oracles}, we estimate the time complexities (ignoring constants) of these four oracles for each cone, counting the cost of cached values and factorizations only once (for the oracle that actually computes them).
\Cref{tab:oracles} shows that the TOO is never more expensive than the feasibility check, gradient, and Hessian oracles (i.e.\ the oracles needed by SY).
Indeed, our computational results in \cref{sec:testing:results} demonstrate that the TOO is very rarely an algorithmic bottleneck in practice.

\begin{table}[!htb]
\centering
\begin{tabular}{ccccccc}
\toprule
cone & $\dim(\K)$ & $\nu$ & feasibility & gradient & Hessian & TOO \\ 
\midrule
$\knn$ & $d$ & $d$ &
$d$ & $d$ &
$d$ & $d$
\\
$\kpsd$ & $\sdim(d)$ & $d$ &
$d^3$ & $d^3$ &
$d^4$ & $d^3$
\\
$\kdnn$ & $\sdim(d)$ & $\sdim(d)$ &
$d^3$ & $d^3$ &
$d^4$ & $d^3$ 
\\
$\ksppsd$ & $d$ & $s$ &
$J D^2 l$ & $J D^2 l$ &
$d J D^2 l$ & $J D^2 l$
\\
$\klmi$ & $d$ & $s$ &
$d s^2 + s^3$ & $d s^3$ &
$d^2 s^2$ & $d s^2 + s^3$ 
\\
$\klinf$ & $1 + d$ & $1 + d$ &
$d$ & $d$ &
$d$ & $d$ 
\\
$\kltwo$, $\ksqr$ & $1 + d$ & $2$ &
$d$ & $d$ &
$d^2$ & $d$
\\
$\klspec$ & $1 + r s$ & $1 + r$ &
$r^2s + r^3$ & $r^2s + r^3$ &
$r^2 s^2$ & $r s^2$
\\
$\kmatsqr$ & $\sdim(r) + 1 + r s$ & $1 + r$ &
$r^2 s + r^3$ & $r^2 s + r^3$ &
$r^2 s^2$ & $r s^2$ 
\\
$\kgpower$ & $r + s$ & $1 + r$ &
$r + s$ & $r + s$ &
$r^2 + s^2$ & $r + s$ 
\\
$\kpower$, $\kgeom$ & $1 + d$ & $1 + d$ &
$d$ & $d$ &
$d^2$ & $d$ 
\\
$\krtdet$ & $1 + \sdim(d)$ & $1 + d$ &
$d^3$ & $d^3$ &
$d^4$ & $d^3$ 
\\
$\klog$ & $2 + d$ & $2 + d$ &
$d$ & $d$ &
$d^2$ & $d$ 
\\
$\klogdet$ & $2 + \sdim(d)$ & $2 + d$ &
$d^3$ & $d^3$ &
$d^4$ & $d^3$ 
\\
$\ksepspec \mhyphen \knn$ & $2 + d$ & $2 + d$ &
$d$ & $d$ &
$d^2$ & $d$
\\
$\ksepspec \mhyphen \kpsd$ & $2 + \sdim(d)$ & $2 + d$ &
$d^3$ & $d^3$ &
$d^5$ & $d^3$
\\
$\krelentr$ & $1 + 2d$ & $1 + 2 d$ &
$d$ & $d$ &
$d^2$ & $d$ 
\\
$\kmatrelentr$ & $1 + 2 \sdim(d)$ & $1 + 2 d$ &
$d^3$ & $d^3$ &
$d^5$ & $d^4$
\\
$\kwsos$ & $d$ & $s r$ &
$d s^2 r $ & $d s^2 r$ &
$d^2 s r$ & $d s^2 r$ 
\\
$\kmatwsos$ & $d \sdim(t)$ & $s t r$ &
$m s^2 t^2 r$ & $d s^2 t^2 r$ &
$d^2 s t^3 r$ & $m s^2 t^2 r$ 
\\
$\klonewsos$ & $d (1 + t)$ & $s t r$ &
$d s^2 t r$ & $d s^2 t r$ &
$d^2 s t r$ & $d s^2 t r$ 
\\
$\kltwowsos$ & $d (1 + t)$ & $2 s r$ &
$d s^2 t r$ & $d s^2 t r$ &
$d^2 s t^2 r$ & $d s^2 t^2 r$
\\
\bottomrule
\end{tabular}
 
\caption{
Cone dimension $\dim(\K)$, LHSCB parameter $\nu$, and time complexity estimates (ignoring constants) for our feasibility check, gradient, Hessian, and TOO implementations, for the exotic cones defined in \cref{sec:cones}.
}
\label{tab:oracles}
\end{table}

Our TOO in \cref{eq:too} is distinct from the `higher order corrector' terms proposed by \citet{mehrotra1992implementation,dahl2021primal}.
The method by \citet{mehrotra1992implementation} only applies to symmetric cones, and \citet{dahl2021primal} test their technique only for the standard exponential cone.
Compared to the third order term proposed by \citet{dahl2021primal}, our TOO has a simpler and more symmetric structure, as it relies on only one direction $\delta_{\bs}$ rather than two.
Like the gradient and Hessian oracles, our TOO is additive for sums of LHSCBs, which can be useful for cones (such as $\kdnn$ and $\kwsos$) that are defined as intersections of other cones.
We leverage these properties to obtain fast and numerically stable TOO implementations.

To illustrate, in \cref{sec:toos:psdslice} we define LHSCBs and derive efficient TOO procedures for a class of cones that can be characterized as intersections of slices of the PSD cone $\kpsd$.
We consider $\klmi$ in \cref{sec:toos:lmi} and $\kwsos^\ast$ and $\kmatwsos^\ast$ in \cref{sec:toos:wsos}. 
In \cref{sec:toos:sppsd}, we handle $\ksppsd$ by differentiating a procedure by \citet{andersen2013logarithmic} for computing Hessian products.
In \cref{sec:toos:eucl} we also show how to compute the TOO for $\kltwo$ and $\ksqr$.
In \citet{coey2021self}, we derive efficient TOO procedures for a class of spectral function cones on positive domains ($\ksepspec$, $\klog$, $\klogdet$, $\kgeom$, $\krtdet$).

\section{Computational testing}
\label{sec:testing}

In \cref{sec:testing:bench}, we introduce a diverse set of exotic conic benchmark instances generated from a variety of applied examples.
In \cref{sec:testing:method}, we describe our methodology for comparing the stepping procedures from \cref{sec:alg:step}, and in \cref{sec:testing:results} we examine our computational results.

\subsection{Exotic conic benchmark set}
\label{sec:testing:bench}

We generate 379 instances (in our primal general form \cref{eq:prim}) from 37 applied examples in Hypatia's examples folder.
All instances are primal-dual feasible except for 12 that are primal infeasible and one that is dual infeasible.
For most examples, we construct multiple formulations using different predefined exotic cones from the list in \cref{sec:cones}.
Each cone from this list appears in at least one instance, so we consider our benchmark set to be the most diverse collection of conic instances available.

We generate most instances using JuMP, but for some we use Hypatia's native model interface.
Due to the size of some instances and the lack of a standard instance storage format recognizing our cone types, we generate all instances on the fly in Julia.
For instances that use random data, we set random seeds to ensure reproducibility.
\Cref{fig:npqKhist} shows the distributions of instance dimensions and exotic cone counts.
All instances have at least one cone (note any $\knn$ cones are concatenated together, so $\knn$ is counted at most once) and take at least one iteration to solve with Hypatia.

Below we briefly introduce each example. 
In \cref{tab:examples}, we summarize for each example the number of corresponding instances and the cone types represented in at least one of the instances.
We do not distinguish dual cones and primal cones in this summary (for example, instances that use $\klinf^\ast$ are only listed as using $\klinf$).
For some examples, we describe a subset of formulations in \citet{coey2020solving,kapelevich2021sum}.
Our benchmark set includes ten instances from CBLIB (a conic benchmark instance library, see \citet{friberg2016cblib}).
We chose to avoid running a larger sample of instances from CBLIB so that the relatively few cone types supported by CBLIB version 3 are not over-represented in our benchmark set.
\begin{description}
\item[Central polynomial matrix.]
Minimize a spectral function of a gram matrix of a polynomial. 
\item[Classical-quantum capacity.]
Compute the capacity of a classical-to-quantum channel (adapted from \citet[Section 3.1]{fawzi2018efficient}).
\item[Condition number.]
Minimize the condition number of a matrix pencil subject to a linear matrix inequality (adapted from \citet[Section 3.2]{boyd1994linear}).
\item[Contraction analysis.]
Find a contraction metric that guarantees global stability of a dynamical system (adapted from \citet[Section 5.3]{aylward2008stability}).
Six instances are primal infeasible.
\item[Convexity parameter.]
Find the strong convexity parameter of a polynomial function over a domain.
\item[Covariance estimation.]
Estimate a covariance matrix that satisfies some given prior information and minimizes a given convex spectral function.
\item[Density estimation.]
Find a valid polynomial density function maximizing the likelihood of a set of observations (compare to \cite[Section 4.3]{papp2014shape}, see \citet[Section 5.6]{coey2020solving}).
\item[Discrete maximum likelihood.] 
Maximize the likelihood of some observations at discrete points, subject to the probability vector being close to a uniform prior.
\item[D-optimal design.]
Solve a D-optimal experiment design problem, i.e.\ maximize the determinant of the information matrix subject to side constraints (adapted from \citet[Section 7.5]{boyd2004convex}; see \citet[Section 5.4]{coey2020solving}).
\item[Entanglement-assisted capacity.]
Compute the entanglement-assisted classical capacity of a quantum channel (adapted from \citet[Section 3.2]{fawzi2018efficient}). 
\item[Experiment design.]
Solve a general experiment design problem that minimizes a given convex spectral function of the information matrix subject to side constraints (adapted from \citet[Section 7.5]{boyd2004convex}).
\item[Linear program.]
Solve a simple linear program.
\item[Lotka-Volterra.]
Find an optimal controller for a Lotka-Volterra model of population dynamics (adapted from \citet[Section 7.2]{korda2016controller}).
\item[Lyapunov stability.]
Minimize an upper bound on the root mean square gain of a dynamical system (adapted from \citet[Section 6.3.2]{boyd1994linear} and \citet[Page 6]{boyd2009lecture}).
\item[Matrix completion.]
Complete a rectangular matrix by minimizing the nuclear norm and constraining the missing entries (compare to \citet[Equation 8]{agrawal2019dgp}; see \citet[Section 5.2]{coey2020solving}).
\item[Matrix quadratic.]
Find a rectangular matrix that minimizes a linear function and satisfies a constraint on the outer product of the matrix.
\item[Matrix regression.]
Solve a multiple-output (or matrix) regression problem with regularization terms, such as $\ell_1$, $\ell_2$, or nuclear norm (see \citet[Section 5.3]{coey2020solving}). 
\item[Maximum volume hypercube.]
Find a maximum volume hypercube (with edges parallel to the axes) inside a given polyhedron or ellipsoid (adapted from \citet[Section 4.3.2]{mosek2020modeling}).
\item[Nearest correlation matrix.]
Compute the nearest correlation matrix in the quantum relative entropy sense (adapted from \citet{fawzi2019semidefinite}).
\item[Nearest polynomial matrix.]
Given a symmetric matrix of polynomials $H$, find a polynomial matrix $Q$ that minimizes the sum of the integrals of its elements over the unit box and guarantees $Q - H$ is pointwise PSD on the unit box.
\item[Nearest PSD matrix.]
Find a sparse PSD matrix or a PSD-completable matrix (with a given sparsity pattern) with constant trace that maximizes a linear function (adapted from \citet{sun2015decomposition}).
\item[Nonparametric distribution.]
Given a random variable taking values in a finite set, compute the distribution minimizing a given convex spectral function over all distributions satisfying some prior information.
\item[Norm cone polynomial.]
Given a vector of polynomials, check a sufficient condition for pointwise membership in $\kltwo$ or $\klinf^\ast$.
Four instances are primal infeasible.
\item[Polynomial envelope.]
Find a polynomial that closely approximates, over the unit box, the lower envelope of a given list of polynomials (see \citet[Section 7.2.1]{papp2019sum}).
\item[Polynomial minimization.]
Compute a lower bound for a given polynomial over a given semialgebraic set (see \citet[Section 7.3.1]{papp2019sum} and \citet[Section 5.5]{coey2020solving}).
Some instances use polynomials with known optimal values from \citet{burkardt2021polynomials}. 
\item[Polynomial norm.]
Find a polynomial that, over the unit box, has minimal integral and belongs pointwise to the epigraph of the $\ell_1$ or $\ell_2$ norm of other given polynomials (see \citet[Section 6]{kapelevich2021sum}).
\item[Portfolio.]
Maximize the expected returns of a stock portfolio and satisfy various risk constraints (see \citet[Section 5.1]{coey2020solving}).
\item[Region of attraction.]
Find the region of attraction of a polynomial control system (see \citet[Section 9.1]{henrion2013convex}).
\item[Relative entropy of entanglement.]
Compute a lower bound on relative entropy of entanglement with a positive partial transpose relaxation (adapted from \citet[Section 4]{fawzi2018efficient}).
\item[Robust geometric programming.]
Bound the worst-case optimal value of an uncertain signomial function with a given coefficient uncertainty set (adapted from \citet[Equation 39]{chandrasekaran2017relative}).
\item[Semidefinite polynomial matrix.]
Check a sufficient condition for global convexity of a given polynomial.
Two instances are primal infeasible and one is dual infeasible.
\item[Shape constrained regression.]
Given a dataset, fit a polynomial function that satisfies shape constraints such as monotonicity or convexity over a domain (see \citet[Section 5.7]{coey2020solving}).
Several instances use real datasets from \citet{mazumder2019computational}.
\item[Signomial minimization.]
Compute a global lower bound for a given signomial function (see \citet{murray2020signomial}).
Several instances use signomials with known optimal values from \citet{murray2020signomial,chandrasekaran2016relative}.
\item[Sparse LMI.]
Optimize over a simple linear matrix inequality with sparse data.
\item[Sparse principal components.]
Solve a convex relaxation of the problem of approximating a symmetric matrix by a rank-one matrix with a cardinality-constrained eigenvector (adapted from \citet[Section 2]{d2007direct}).
\item[Stability number.]
Given a graph, solve for a particular strengthening of the theta function towards the stability number (adapted from \citet[Equation 2.4]{laurent2015conic}).
\end{description}

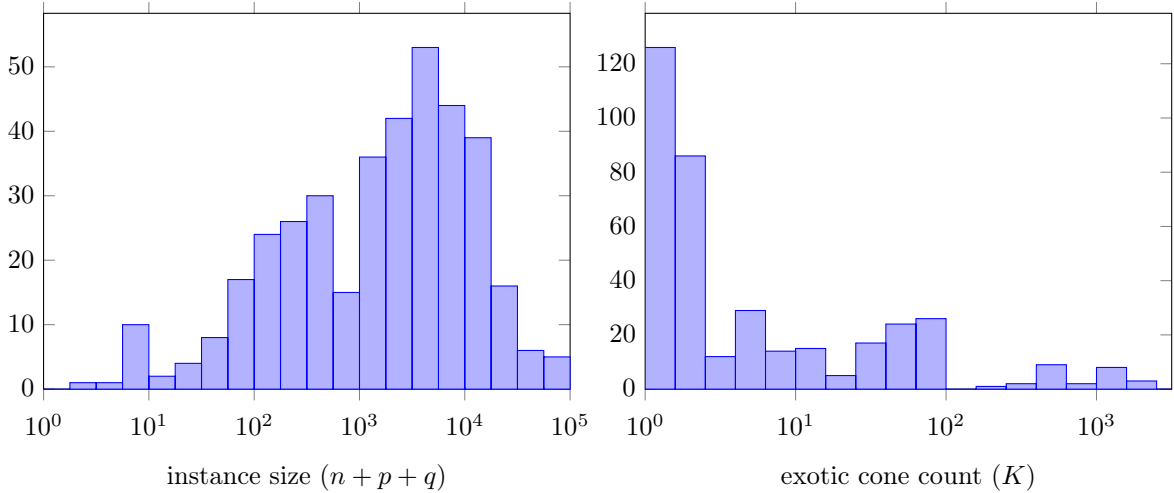
\begin{figure}[!htb]
\centering
\begin{tikzpicture}
\begin{groupplot}[
    group style = {group size=2 by 1, horizontal sep=1cm},
    width = 7cm,
    height = 5cm,
    scale only axis,
    ymin=0, 
    ybar,
    table/col sep=comma,
    typeset ticklabels with strut,
    ]
\nextgroupplot[
    xlabel = instance size ($n + p + q$), 
    xmin=0,
    xmax=5,
    xtick={0,1,2,3,4,5},
    xticklabels={$10^0$,$10^1$,$10^2$,$10^3$,$10^4$,$10^5$},
    hist={bins=20, data min=0, data max=5},
    ]
\addplot+ [] table [y index=0, y=log_npq] {csvs/inst_stats.csv};
\nextgroupplot[
    xlabel = exotic cone count ($K$),
    xmin=0,
    xmax=3.5,
    xtick={0,1,2,3},
    xticklabels={$10^0$,$10^1$,$10^2$,$10^3$},
    hist={bins=20, data min=0, data max=4},
    ]
]
\addplot+ [] table [y index=0, y=log_numcones] {csvs/inst_stats.csv};
\end{groupplot}
\end{tikzpicture}
\caption{
Histograms summarizing the benchmark instances in the primal conic form \cref{eq:prim}.
Instance size (log scale) is the sum of the primal variable, equality, and conic constraint dimensions.
Exotic cone count (log scale) is the number of exotic cones comprising the Cartesian product cone.
}
\label{fig:npqKhist}
\end{figure}

\begin{table}[!htbp]
\centering
\begin{tabular}{@{}rrl@{}}
\toprule 
example & \# & cones in at least one instance \\
\midrule
CBLIB & 10 &
$\knn \; \kpsd \; \kltwo \; \ksqr \; \klog \; \kgpower$ 
\\
central polynomial matrix & 24 & 
$\knn \; \kpsd \; \ksqr \; \kgpower \; \krtdet \; \klog \; \ksepspec$ 
\\
classical-quantum capacity & 9 & 
$\knn \; \kpsd \; \klog \; \ksepspec$ 
\\
condition number & 6 & 
$\knn \; \kpsd \; \klmi$ 
\\
contraction analysis & 8 & 
$\kpsd \; \kmatwsos$ 
\\
convexity parameter & 7 & 
$\kpsd \; \kmatwsos$ 
\\
covariance estimation & 13 &
$\knn \; \kpsd \; \ksqr \; \kgpower \; \krtdet \; \klog \; \ksepspec$
\\
density estimation & 16 & 
$\knn \; \kpsd \; \ksqr \; \kgeom \; \klog \; \kwsos$ 
\\
discrete maximum likelihood & 7 &
$\knn \; \kpower \; \klog \; \ksepspec$
\\
D-optimal design & 16 &
$\knn \; \kpsd \; \klinf \; \kltwo \; \ksqr \; \kgeom \; \krtdet \; \klog \; \klogdet$
\\
entanglement-assisted capacity & 3 &
$\kpsd \; \ksepspec \; \kmatrelentr$
\\
experiment design & 13 & 
$\knn \; \kpsd \; \ksqr \; \kgpower \; \krtdet \; \klog \; \ksepspec$ 
\\
linear program & 3 & 
$\knn$ 
\\
Lotka-Volterra & 3 & 
$\kpsd$ 
\\
Lyapunov stability & 10 & 
$\kpsd \; \kmatsqr$ 
\\
matrix completion & 11 & 
$\knn \; \kpsd \; \ksqr \; \klspec \; \kgpower \; \kgeom \; \klog$ 
\\
matrix quadratic & 8 & 
$\kpsd \; \kmatsqr$ 
\\
matrix regression & 11 & 
$\knn \; \kpsd \; \klinf \; \kltwo \; \ksqr \; \klspec$ 
\\
maximum volume hypercube & 15 & 
$\knn \; \klinf \; \kltwo \; \ksqr \; \kgeom $ 
\\
nearest correlation matrix & 3 &
$\kmatrelentr$
\\
nearest polynomial matrix & 8 & 
$\kpsd \; \kwsos \; \kmatwsos$ 
\\
nearest PSD matrix & 28 & 
$\kpsd \; \ksppsd$ 
\\
nonparametric distribution & 10 &
$\knn \; \ksqr \; \kgeom \; \klog \; \ksepspec$
\\
norm cone polynomial & 10 & 
$\klonewsos \; \kltwowsos$ 
\\
polynomial envelope & 7 & 
$\kwsos$ 
\\
polynomial minimization & 15 & 
$\kpsd \; \kwsos$ 
\\
polynomial norm & 10 & 
$\kwsos \; \kmatwsos \; \klonewsos \; \kltwowsos$ 
\\
portfolio & 9 & 
$\knn \; \klinf \; \kltwo$ 
\\
region of attraction & 6 & 
$\kpsd \; \kwsos$ 
\\
relative entropy of entanglement & 6 &
$\kpsd \; \kmatrelentr$ 
\\
robust geometric programming & 6 & 
$\knn \; \klinf \; \klog \; \krelentr$ 
\\
semidefinite polynomial matrix & 18 & 
$\kpsd \; \kltwo \; \kmatwsos$ 
\\
shape constrained regression & 11 & 
$\knn \; \kpsd \; \klinf \; \kltwo \; \kwsos \; \kmatwsos$ 
\\
signomial minimization & 13 & 
$\knn \; \klog \; \krelentr$ 
\\
sparse LMI & 15 &
$\kpsd \; \ksppsd \; \klmi$
\\
sparse principal components & 6 & 
$\knn \; \kpsd \; \klinf$ 
\\
stability number & 6 & 
$\knn \; \kpsd \; \kdnn$ 
\\
\bottomrule
\end{tabular}
\caption{
For each example, the count of instances and list of exotic cones (defined in \cref{sec:cones}) used in at least one instance.
}
\label{tab:examples}
\end{table}

\subsection{Methodology}
\label{sec:testing:method}

We can assess the practical performance of a stepping procedure on a given benchmark instance according to several metrics: whether the correct conic certificate (satisfying our numerical tolerances, discussed below) is found, and if so, the PDIPM iteration count and solve time.
Across the benchmark set, we compare performance between consecutive pairs of the five stepping procedures outlined in \cref{sec:alg:step}.
\begin{description}
\item[basic.]
The basic prediction or centering stepping procedure without any enhancements; described in \cref{sec:alg:step:basic}, this is similar to the method in Alfonso solver \citep{papp2021alfonso}, which is a practical implementation of the algorithm by \citet{skajaa2015homogeneous,papp2017homogeneous}.
\item[prox.]
The \emph{basic} procedure modified to use a less restrictive central path proximity condition; described in \cref{sec:alg:step:prox}.
\item[TOA.]
The \emph{prox} procedure with the TOA enhancement to incorporate third order LHSCB information; described in \cref{sec:alg:step:toa}.
\item[curve.]
The \emph{TOA} procedure adapted for a single backtracking search on a curve instead of two backtracking line searches; described in \cref{sec:alg:step:curve}.
\item[comb.]
The \emph{curve} procedure modified to search along a curve of combinations of both the prediction and centering directions and their corresponding adjustment directions; described in \cref{sec:alg:step:comb}.
\end{description}

We perform all instance generation, computational experiments, and results analysis using double precision floating point format, with Ubuntu 21.04, Julia 1.7, and Hypatia 0.5.1 (with default options), on dedicated hardware with an AMD Ryzen 9 3950X 16-core processor (32 threads) and 128GB of RAM.
In \cref{sec:linalg}, we outline the default procedures Hypatia uses for preprocessing, initial point finding, and linear system solving for search directions.
Simple scripts and instructions for reproducing all results are available in Hypatia's benchmarks/stepper folder.
The benchmark script runs all solves twice and uses results from the second run, to exclude Julia compilation overhead.
A CSV file containing raw results is available at the Hypatia wiki page.

When Hypatia converges for an instance, i.e.\ claims it has found a certificate of optimality, primal infeasibility, or dual infeasibility, our scripts verify that this is the correct type of certificate for that instance.
For some instances, our scripts also check additional conditions, for example that the objective value of an optimality certificate approximately equals the known true optimal value.
We do not set restrictive time or iteration limits.
All failures to converge are caused by Hypatia `stalling' during the stepping iterations: either the backtracking search cannot step a distance of at least the minimal value in the $\alpha$ schedule, or across several prediction steps or combined directions steps, Hypatia fails to make sufficient progress towards meeting the convergence conditions in \cref{sec:alg:alg}.

Since some instances are more numerically challenging than others, we set the termination tolerances (described in \cref{sec:alg:alg}) separately for each instance.
Let $\epsilon \approx 2.22 \times 10^{-16}$ be the machine epsilon.
For most instances, we use $\varepsilon_f = \varepsilon_r = 10 \epsilon^{1/2} \approx 1.49 \times 10^{-7}$ for the feasibility and relative gap tolerances, $\varepsilon_i = \varepsilon_a = 10 \epsilon^{3/4} \approx 1.82 \times 10^{-11}$ for the infeasibility and absolute gap tolerances, and $\varepsilon_p = 0.1 \epsilon^{3/4} \approx 1.82 \times 10^{-13}$ for the ill-posedness tolerance.
For 50 instances that are particularly numerically challenging, we loosen all of these tolerances by a factor of either 10 or 100, and for two challenging primal infeasible instances of the \emph{contraction analysis} example, we set $\varepsilon_i = 10^{-9}$.
This ensures that for every benchmark instance, at least one of the five stepping procedures converges.

Following \citet{fleming1986not}, we define the \emph{shifted geometric mean} with shift $s \geq 0$, for $d$ values $v \in \bbR_>^d$, as:
\begin{equation}
M(v,s) \coloneqq 
\textstyle\prod_{i \in \iin{d}} (v_i + s)^{1/d} - s.
\label{eq:shiftgeomean}
\end{equation}
We always apply a shift of one for iteration counts. 
Since different stepping procedures converge on different subsets of instances, in tables we show three types of shifted geometric means, each computed from a vector of values ($v$ in \cref{eq:shiftgeomean}) obtained using one of the following approaches.
\begin{description}
\item[every.]
Values for the 353 instances on which every stepping procedure converged. 
\item[this.]
Values for instances on which this stepping procedure (corresponding to the row of the table) converged.
\item[all.]
Values for all instances, but for any instances for which this stepping procedure (corresponding to the row of the table) failed to converge, the value is replaced with two times the maximum value for that instance across the stepping procedures that converged.
\end{description}
The shifted geometric means for the \emph{every} approach are the most directly comparable because they are computed on a fixed subset of instances, so we usually quote the \emph{every} results in our discussion in \cref{sec:testing:results}.

\Cref{tab:agg} shows counts of converged instances and shifted geometric means of iteration count and total solve time (in milliseconds), for the five stepping procedures.
We use a shift of one millisecond for the solve times in \cref{tab:agg}, as some instances solve very quickly (see \cref{fig:solvehist}).

\Cref{tab:subtime} shows shifted geometric means of the time (in milliseconds) Hypatia spends performing each of the following key algorithmic components, for the five stepping procedures.
\begin{description}
\item[init.]
Performed once during an entire solve run, independently of the stepping iterations.
Includes rescaling and preprocessing of model data, initial interior point finding, and linear system solver setup (see \cref{sec:linalg}).
\item[LHS.]
Performed at the start of each iteration.
Includes updating data that the linear system solver (which has a fixed LHS in each iteration) uses to efficiently compute at least one direction (such as updating and factorizing the positive definite matrix in \cref{sec:linalg}).
\item[RHS.]
Performed between one and four times per iteration, depending on the stepping procedure.
Includes updating an RHS vector (see \cref{eq:dirs}) for the linear system for search directions.
Note that the TOO is only evaluated while computing the centering TOA RHS \cref{eq:centtoasys:K} and the prediction TOA RHS \cref{eq:predtoasys:K}.
\item[direc.]
Performed for each RHS vector.
Includes solving the linear system for a search direction (see \cref{eq:dirs}) using the data computed during \emph{LHS} and a single RHS vector computed during \emph{RHS}, and performing iterative refinement on the direction (see \cref{sec:linalg}).
\item[search.]
Performed once or twice per iteration (occasionally more if the step length is near zero), depending on the stepping procedure.
Includes searching using backtracking along a line or curve to find an interior point satisfying the proximity conditions (see \cref{sec:search}).
\end{description}
For some instances that solve extremely quickly, these subtimings sum to only around half of the total solve time due to extraneous overhead.
However for slower instances, these components account for almost the entire solve time.
In \cref{tab:subtime}, \emph{total} is the time over all iterations, and \emph{per iteration} is the average time per iteration (the arithmetic means are computed before the shifted geometric mean).
We use a shift of 0.1 milliseconds for the \emph{init} and \emph{total} subtimings (left columns) and a shift of 0.01 milliseconds for the \emph{per iteration} subtimings (right columns).

Finally, in \cref{fig:perftotal,fig:perf} we use \emph{performance profiles} \citep{dolan2002benchmarking,gould2016note} to compare iteration counts and solve times between pairs of stepping procedures.
These should be interpreted as follows.
The \emph{performance ratio} for procedure $i$ and instance $j$ is the value (iterations or solve time) attained by procedure $i$ on instance $j$ divided by the better/smaller value attained by the two procedures on instance $j$.
Hence a performance ratio is at least one, and smaller values indicate better relative performance.
For a point $(x,y)$ on a performance profile curve for a particular procedure, $x$ is the logarithm (base $2$) of performance ratio and $y$ is the proportion of instances for which the procedure attains that performance ratio or better/smaller.
For example, a curve crosses the vertical axis at the proportion of instances on which the corresponding procedure performed at least as well as the alternative procedure.
We use the Julia package BenchmarkProfiles.jl \citep{juliasmoothoptimizers2021benchmark} to compute coordinates for the performance profile curves.

\subsection{Results}
\label{sec:testing:results}

\Cref{tab:agg,fig:perf} demonstrate that each of the four cumulative stepping enhancements tends to improve Hypatia's iteration count and solve time.
The enhancements do not have a significant impact on the number of instances Hypatia converges on. 
However, if we had enforced time or iteration limits, the enhancements would have also improved the number of instances solved.
This is clear from \cref{fig:solvehist}, which shows the distributions of iteration counts and solve times for the \emph{basic} and \emph{comb} stepping procedures.
We note that \cref{fig:comb} (left) supports the intuition that formulation size is strongly positively correlated with solve time for \emph{comb}.

Overall, \cref{tab:agg} shows that on the subset of instances solved by every stepping procedure (\emph{every}), the enhancements together reduce the shifted geometric means of iterations and solve time by more than 80\% and 70\% respectively (i.e.\ comparing \emph{comb} to \emph{basic}). 
\Cref{fig:perftotal} shows that the iteration count and solve time improve on nearly every instance solved by both \emph{basic} and \emph{comb}, and the horizontal axis scale shows that the magnitude of these improvements is large on most instances. 
\Cref{fig:relimpr} shows that for instances that take more iterations or solve time, the enhancements tend to yield a greater improvement in these measures.
On every instance, the enhancements improve the iteration count by at least 33\%.
The few instances for which solve time regressed with the enhancements all solve relatively quickly.

\begin{figure}[!htb]
\centering
\begin{tikzpicture}[]
\begin{groupplot}[
    group style = {group size=2 by 1, horizontal sep=0.9cm},
    width = 7cm,
    height = 5cm,
    scale only axis,
    ymin=0, 
    ybar,
    table/col sep=comma,
    typeset ticklabels with strut,
    ]
\nextgroupplot[
    xlabel = iteration count, 
    xtick={0,1,2,3}, 
    xticklabels={$10^0$,$10^1$,$10^2$,$10^3$},
    xmax=3,
    xmin=0,
    hist={bins=20, data min=0, data max=3},
    ]
\addplot+ [
    red!60,
    fill opacity=0.2,
    ] table [y index=0, y=log_iters]  {csvs/basicconv.csv};
\addplot+ [
    blue!60,
    fill opacity=0.4,
    ] table [y index=0, y=log_iters]  {csvs/combconv.csv};
\legend{basic,comb}
\nextgroupplot[
    xlabel = solve time (seconds),
    xtick={-4,-2,0,2,4}, 
    xticklabels={$10^{-4}$,$10^{-2}$,$10^0$,$10^2$,$10^4$},
    xmax=4,
    xmin=-4.5,
    hist={bins=20, data min=-4, data max=4},
    ]
\addplot+ [
    red!60,
    fill opacity=0.2,
    ] table [y index=0, y=log_solve_time] {csvs/basicconv.csv};
\addplot+ [
    blue!60,
    fill opacity=0.4,
    ] table [y index=0, y=log_solve_time] {csvs/combconv.csv};
\legend{basic,comb}
\end{groupplot}
\end{tikzpicture}
\caption{
Overlayed histograms of iteration count (left, log scale) and solve time (right, log scale, in seconds) for the \emph{basic} and \emph{comb} stepping procedures, excluding instances that fail to converge.
}
\label{fig:solvehist}
\end{figure}
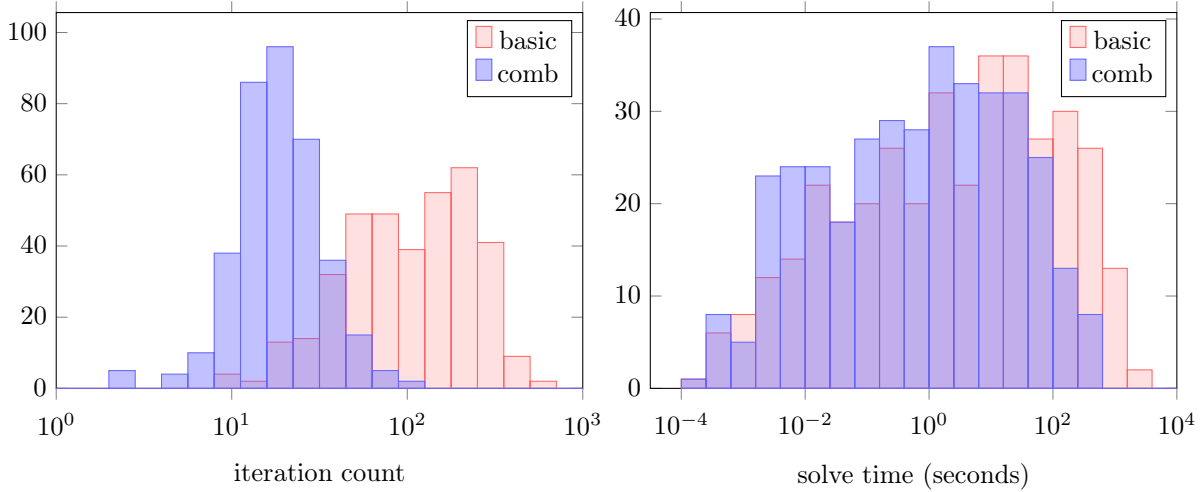

\begin{table}[!htb]
\sisetup{
table-text-alignment = right,
table-auto-round,
table-format = 3.1,
}
\centering
\begin{tabular}{
cc
SSS
S[table-format = 4]S[table-format = 4]S[table-format = 4]
}
\toprule
& & \multicolumn{3}{c}{iterations} & \multicolumn{3}{c}{solve time} \\
\cmidrule(lr){3-5} \cmidrule(lr){6-8}
step & conv & {every} & {this} & {all} & {every} & {this} & {all} \\
\midrule
basic & 371 & 101.34 & 100.93 & 102.39 & 2130.98 & 2207.10 & 2282.19 \\
prox & 369 & 64.73 & 65.28 & 67.23 & 1316.50 & 1390.14 & 1451.24 \\
TOA & 374 & 34.98 & 35.31 & 36.08 & 1014.26 & 1062.66 & 1103.01 \\
curve & 372 & 29.67 & 30.01 & 30.99 & 742.49 & 780.95 & 820.16 \\
comb & 367 & 18.31 & 18.55 & 20.02 & 623.82 & 655.71 & 706.49 \\
\bottomrule
\end{tabular}

\caption{
For each stepping procedure, the number of converged instances and shifted geometric means of iterations and solve times (in milliseconds).
}
\label{tab:agg}
\end{table}

\begin{figure}[!htb]
\centering
\begin{tikzpicture}
\begin{groupplot}[
    group style = {group size=2 by 1, horizontal sep=0.9cm},
    width = 7cm,
    height = 5cm,
    scale only axis,
    xmin=0, 
    ymin=0, ymax=1,
    no markers,
    every axis plot/.append style={line width=1pt},
    legend pos = south east,
    x tick label style={/pgf/number format/.cd, fixed},
    table/col sep=comma,
    ]
\nextgroupplot[
    title = iteration count, 
    xmax=5, 
    ]
\addplot [black] table [x=x, y=y] 
{csvs/basic_vs_comb_iters.csv};
\addplot [red] table [x=x, y=y] 
{csvs/comb_vs_basic_iters.csv};
\legend{basic,comb};
\nextgroupplot[
    title = solve time,
    xmax=4, 
    ]
\addplot [black] table [x=x, y=y] 
{csvs/basic_vs_comb_solve_time.csv};
\addplot [red] table [x=x, y=y] 
{csvs/comb_vs_basic_solve_time.csv};
\legend{basic,comb};
\end{groupplot}
\end{tikzpicture}
\caption{
Performance profiles (see \cref{sec:testing:method}) of iteration count (left) and solve time (right) for the four stepping enhancements overall.
}
\label{fig:perftotal}
\end{figure}

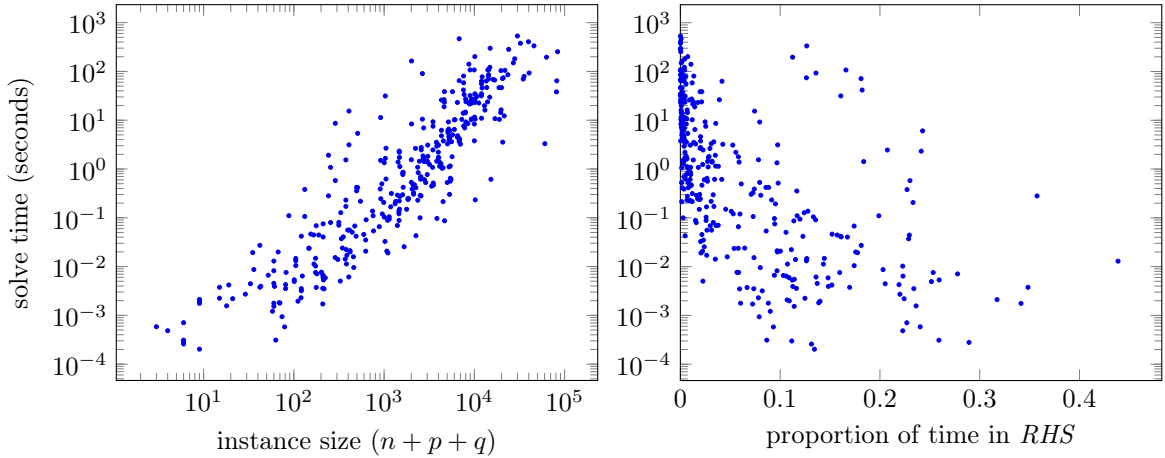
\begin{figure}[!htb]
\centering
\begin{tikzpicture}[]
\begin{groupplot}[
    group style = {group size=2 by 1, horizontal sep=1.1cm},
    width = 6.4cm,
    height = 5cm,
    scale only axis,
    table/col sep=comma,
    xticklabel style={/pgf/number format/fixed},
    ]
\nextgroupplot[
    xmode = log,
    ymode = log,
    xlabel = instance size ($n + p + q$),
    ylabel = solve time (seconds),
    ]
\addplot+ [only marks, mark size = 0.75pt] table [x=npq, y=solve_time] {csvs/combconv.csv};
\nextgroupplot[
    xmin = 0,
    ymode = log,
    xlabel = proportion of time in \emph{RHS},
    ]
\addplot+ [only marks, mark size = 0.75pt] table [x=prop_rhs, y=solve_time] {csvs/combconv.csv};
\end{groupplot}
\end{tikzpicture}
\caption{
Solve time (log scale, in seconds) for the \emph{comb} stepping procedure against (left) instance size (log scale) and (right) the proportion of solve time spent in \emph{RHS}, excluding instances that fail to converge.
}
\label{fig:comb}
\end{figure}

\begin{table}[!htb]
\sisetup{
table-text-alignment = right,
table-auto-round,
}
\centering
\begin{tabular}{
@{}cc
S[table-format = 2.1]
S[table-format = 3]
S[table-format = 2.2]
S[table-format = 2.1]
S[table-format = 3.1]
S[table-format = 1.2]
S[table-format = 1.2]
S[table-format = 1.2]
S[table-format = 1.2]
}
\toprule
& & & \multicolumn{4}{c}{total} & \multicolumn{4}{c}{per iteration} \\
\cmidrule(lr){4-7} \cmidrule(lr){8-11}
set & step & {init} & {LHS} & {RHS} & {direc} & {search} & {LHS} & {RHS} & {direc} & {search} \\
\midrule
\multirow{5}{*}{every}
 & basic & 29.49 & 741.38 & 1.45 & 75.59 & 125.64 & 7.73 & 0.02 & 0.81 & 1.29 \\
 & prox & 29.45 & 486.34 & 1.11 & 50.33 & 67.74 & 7.88 & 0.02 & 0.84 & 1.10 \\
 & TOA & 29.42 & 284.68 & 10.96 & 52.21 & 73.33 & 8.28 & 0.32 & 1.53 & 2.14 \\
 & curve & 29.55 & 244.38 & 9.24 & 44.60 & 33.43 & 8.33 & 0.32 & 1.53 & 1.15 \\
 & comb & 29.31 & 159.94 & 10.51 & 57.57 & 35.23 & 8.74 & 0.58 & 3.14 & 1.94 \\
\addlinespace[2pt]
\hdashline
\addlinespace[2pt]
\multirow{5}{*}{this}
 & basic & 30.33 & 784.33 & 1.48 & 78.82 & 131.78 & 8.20 & 0.02 & 0.85 & 1.36 \\
 & prox & 30.07 & 519.48 & 1.12 & 53.43 & 72.60 & 8.35 & 0.02 & 0.88 & 1.16 \\
 & TOA & 30.16 & 301.54 & 11.97 & 55.44 & 78.34 & 8.70 & 0.35 & 1.61 & 2.26 \\
 & curve & 30.52 & 260.67 & 9.99 & 47.30 & 35.10 & 8.80 & 0.34 & 1.60 & 1.20 \\
 & comb & 30.46 & 171.04 & 11.03 & 60.72 & 36.40 & 9.23 & 0.60 & 3.27 & 1.98 \\
\addlinespace[2pt]
\hdashline
\addlinespace[2pt]
\multirow{5}{*}{all}
 & basic & 31.13 & 814.44 & 1.62 & 82.65 & 134.65 & 8.52 & 0.02 & 0.91 & 1.40 \\
 & prox & 31.29 & 549.29 & 1.25 & 56.22 & 75.09 & 8.74 & 0.02 & 0.94 & 1.20 \\
 & TOA & 31.16 & 316.91 & 12.23 & 57.52 & 79.71 & 9.04 & 0.36 & 1.66 & 2.28 \\
 & curve & 31.38 & 275.95 & 10.40 & 49.63 & 37.34 & 9.17 & 0.36 & 1.68 & 1.26 \\
 & comb & 31.36 & 188.07 & 11.88 & 64.18 & 40.02 & 9.66 & 0.63 & 3.35 & 2.10 \\
\bottomrule
\end{tabular}

\caption{
For each stepping procedure, the shifted geometric means of subtimings (in milliseconds) for the key algorithmic components.
}
\label{tab:subtime}
\end{table}

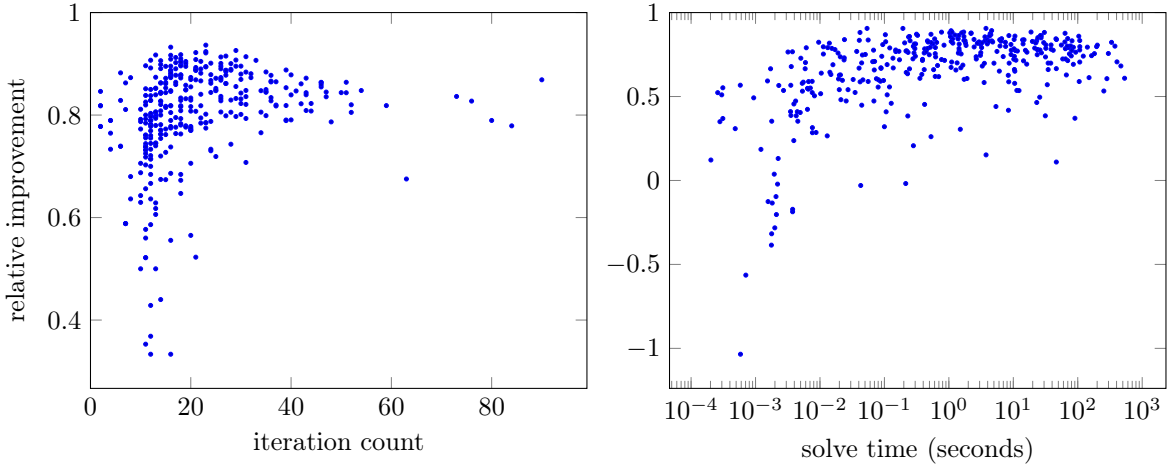
\begin{figure}[!htb]
\centering
\begin{tikzpicture}[]
\begin{groupplot}[
    group style = {group size=2 by 1, horizontal sep=1.1cm},
    width = 6.6cm,
    height = 5cm,
    scale only axis,
    table/col sep=comma,
    xticklabel style={/pgf/number format/fixed},
    ]
\nextgroupplot[
    ylabel = relative improvement,
    xmin = 0,
    ymax = 1, 
    xlabel = iteration count,
    ]
\addplot+ [only marks, mark size = 0.75pt] table [x=iters, y=iters_impr, col sep=comma] {csvs/basiccombconv.csv};
\nextgroupplot[
    xmode = log,
    xlabel = solve time (seconds),
    ymax = 1, 
    ]
\addplot+ [only marks, mark size = 0.75pt] table [x=solve_time, y=time_impr, col sep=comma] {csvs/basiccombconv.csv};
\end{groupplot}
\end{tikzpicture}
\caption{
Relative improvement, from \emph{basic} to \emph{comb}, in iteration count (left) or solve time (right) against iteration count or solve time (in seconds) respectively for \emph{comb}, over the 356 instances on which both \emph{basic} and \emph{comb} converge.
}
\label{fig:relimpr}
\end{figure}

\begin{figure}[!p]
\centering
\begin{tikzpicture}
\begin{groupplot}[
    group style = {group size=2 by 4, vertical sep=0.9cm, horizontal sep=1cm},
    width = 7cm,
    height = 3.8cm,
    scale only axis,
    xmin=0, 
    ymin=0, ymax=1,
    no markers,
    every axis plot/.append style={line width=1pt},
    legend pos = south east,
    x tick label style={/pgf/number format/.cd, fixed},
    title style={yshift=5pt},
    table/col sep=comma,
    ]
\nextgroupplot[
    title = iteration count, 
    xmax=2.5, 
    ]
\addplot [black] table [x=x, y=y] 
{csvs/basic_vs_prox_iters.csv};
\addplot [red] table [x=x, y=y] 
{csvs/prox_vs_basic_iters.csv};
\legend{basic,prox};
\nextgroupplot[
    title = solve time,
    xmax=2.5, 
    ]
\addplot [black] table [x=x, y=y] 
{csvs/basic_vs_prox_solve_time.csv};
\addplot [red] table [x=x, y=y] 
{csvs/prox_vs_basic_solve_time.csv};
\legend{basic,prox};
\nextgroupplot[
    xmax=2, 
    ]
\addplot [black] table [x=x, y=y] 
{csvs/prox_vs_toa_iters.csv};
\addplot [red] table [x=x, y=y] 
{csvs/toa_vs_prox_iters.csv};
\legend{prox,TOA};
\nextgroupplot[
    xmax=1.5, 
    ]
\addplot [black] table [x=x, y=y] 
{csvs/prox_vs_toa_solve_time.csv};
\addplot [red] table [x=x, y=y] 
{csvs/toa_vs_prox_solve_time.csv};
\legend{prox,TOA};
\nextgroupplot[
    xmax=1, 
    ]
\addplot [black] table [x=x, y=y] 
{csvs/toa_vs_curve_iters.csv};
\addplot [red] table [x=x, y=y] 
{csvs/curve_vs_toa_iters.csv};
\legend{TOA,curve};
\nextgroupplot[
    xmax=2, 
    ]
\addplot [black] table [x=x, y=y] 
{csvs/toa_vs_curve_solve_time.csv};
\addplot [red] table [x=x, y=y] 
{csvs/curve_vs_toa_solve_time.csv};
\legend{TOA,curve};
\nextgroupplot[
    xmax=2, 
    ]
\addplot [black] table [x=x, y=y] 
{csvs/curve_vs_comb_iters.csv};
\addplot [red] table [x=x, y=y] 
{csvs/comb_vs_curve_iters.csv};
\legend{curve,comb};
\nextgroupplot[
    xmax=2, 
    ]
\addplot [black] table [x=x, y=y] 
{csvs/curve_vs_comb_solve_time.csv};
\addplot [red] table [x=x, y=y] 
{csvs/comb_vs_curve_solve_time.csv};
\legend{curve,comb};
\end{groupplot}
\end{tikzpicture}
\caption{
Performance profiles (see \cref{sec:testing:method}) of iteration count (left column) and solve time (right column) for the four stepping enhancements (rows).
}
\label{fig:perf}
\end{figure}
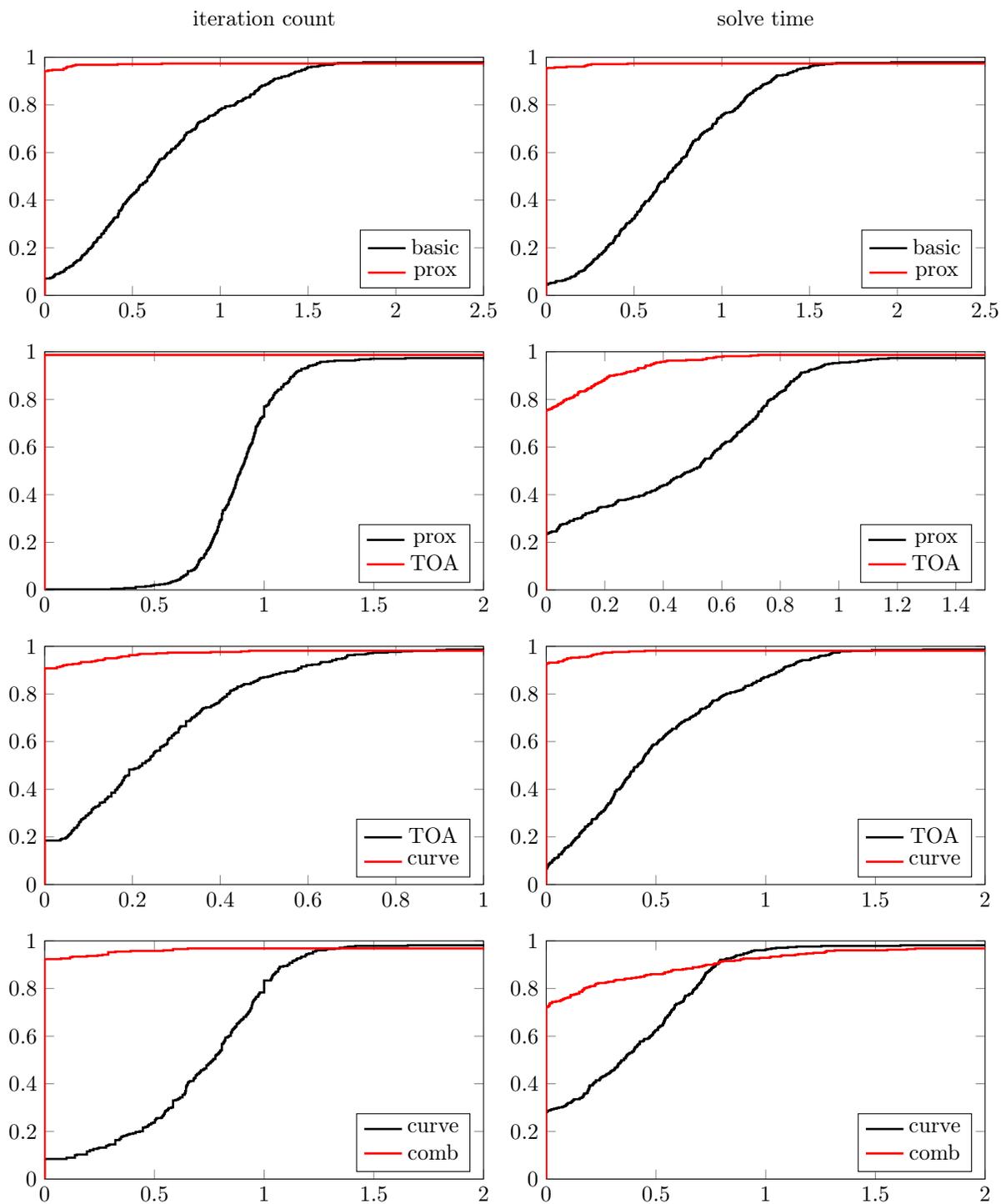

Each enhancement, by design, changes one modular component or aspect of the stepping procedure.
Below, we examine the impact of our algorithmic choices by discussing pairwise comparisons of consecutive stepping procedures.


\subsubsection{Less restrictive proximity}
\label{sec:testing:prox}

We compare \emph{basic} and \emph{prox} to evaluate the central path proximity enhancement introduced in \cref{sec:alg:step:prox}.
\Cref{fig:perf} (first row) shows that the iteration count and solve time improve for nearly all instances.
From \cref{tab:agg}, the shifted geometric means of iteration count and solve time improve by over 35\%.

The similarity between the iteration count and solve time performance profiles in \cref{fig:perf} and also between the per iteration subtimings in \cref{tab:subtime} suggests that the solve time improvement is driven mainly by the reduction in iteration count.
The per iteration \emph{search} time decreases slightly, since on average fewer backtracking search steps are needed per iteration for \emph{prox} (because it tends to step further in the prediction directions, as evidenced by the smaller iteration counts).
These results suggest that the central path proximity restrictions in the algorithms by \citet{skajaa2015homogeneous,papp2021alfonso} are too conservative from the perspective of practical performance, and that we need not restrict iterates to a very small neighborhood of the central path in order to obtain high quality prediction directions in practice.

\subsubsection{Third order adjustments}
\label{sec:testing:toa}

We compare \emph{prox} and \emph{TOA} to evaluate the TOA enhancement introduced in \cref{sec:alg:step:toa}.
\Cref{fig:perf} (second row) shows that the iteration count improves for all instances and by a fairly consistent magnitude, and the solve time improves for nearly 80\% of instances.
From \cref{tab:agg}, the shifted geometric means of iteration count and solve time improve by over 45\% and over 20\% respectively.

Since \emph{TOA} computes an additional direction and performs an additional backtracking search every iteration, the per iteration times for \emph{direc} and \emph{search} in \cref{tab:subtime} nearly double.
The \emph{RHS} time increases substantially, because the TOO is evaluated for the second RHS vector (used to compute the TOA direction), but \emph{RHS} is still much faster than the other components.
Per iteration, \emph{direc} and \emph{search} also remain fast compared to \emph{LHS}.
We see an overall solve time improvement because the reduction in iteration count usually outweighs the additional cost at each iteration.
This suggests that the TOO is generally relatively cheap to compute, and our TOA approach very reliably improves the quality of the search directions.

\subsubsection{Curve search}
\label{sec:testing:curve}

We compare \emph{TOA} and \emph{curve} to evaluate the curve search enhancement introduced in \cref{sec:alg:step:curve}.
\Cref{fig:perf} (third row) shows that the iteration count and solve time improve for most instances, with larger and more consistent improvements for the solve time.
From \cref{tab:agg}, the shifted geometric means of iteration count and solve time improve by over 15\% and over 25\% respectively.

Since \emph{curve} performs one backtracking search along a curve instead of the two backtracking line searches needed by \emph{TOA}, the per iteration \emph{search} time in \cref{tab:subtime} nearly halves.
The other subtimings are unaffected, so \emph{curve} improves the speed of each iteration.
The improvement in iteration count may stem from the more dynamic nature of the curve search compared to \emph{TOA}'s approach of computing a fixed combination of the unadjusted and TOA directions as a function of the step distance in the unadjusted direction.

\subsubsection{Combined directions}
\label{sec:testing:comb}

Finally, we compare \emph{curve} and \emph{comb} to evaluate the combined directions enhancement introduced in \cref{sec:alg:step:comb}.
\Cref{fig:perf} (fourth row) shows that the iteration count and solve time improve on around 90\% and 70\% of instances respectively.
From \cref{tab:agg}, the shifted geometric means of iteration count and solve time improve by nearly 40\% and over 15\% respectively.

Since \emph{comb} computes four directions per iteration (unadjusted and TOA directions for both prediction and centering) instead of two, the per iteration times for \emph{RHS} and \emph{direc} approximately double in \cref{tab:subtime}.
The \emph{search} time increases because on average more backtracking curve search steps are needed per iteration (for \emph{curve}, the centering phase typically does not require multiple backtracking steps).
Per iteration, \emph{LHS} remains slower than the other components combined.
Hence combining the prediction and centering phases generally improves practical performance, and should be more helpful when \emph{LHS} is particularly expensive (such as when $n - p$, the side dimension of the PSD matrix we factorize during \emph{LHS}, is large; see \cref{sec:linalg}).
Furthermore, \cref{fig:comb} (right) shows that for most instances, \emph{RHS} accounts for a small proportion of the overall solve time for \emph{comb}, especially for instances that take longer to solve.
This suggests that the TOO is rarely a bottleneck for our \emph{comb} stepping procedure.

\appendix
\section{Preprocessing and solving for search directions}
\label{sec:linalg}

We discuss preprocessing and initial point finding procedures and solving structured linear systems for directions.
Although Hypatia has various alternative options for these procedures, we only describe the set of options we fix in our computational experiments in \cref{sec:testing}, to give context for these results.
These techniques are likely to be useful for other conic PDIPM implementations.

Given a conic model specified in the general primal conic form \cref{eq:prim}, we first rescale the primal and dual equality constraints \cref{eq:prim:eq,eq:dual:eq} to improve the conditioning of the affine data.
Next, we perform a QR factorization of $A'$ and check whether any primal equalities are inconsistent (terminating if so).
We use this factorization to modify $c, G, h$ and eliminate all $p$ primal equalities (removing dual variable $y$), reducing the dimension of the primal variable $x$ from $n$ to $n - p$.
Next, we perform a QR factorization of the modified $G$.
We use this factorization to check whether any dual equalities are inconsistent (terminating if so) and to remove any redundant dual equalities, further reducing the dimension of $x$.
This factorization also allows us to cheaply compute an initial $x^0$ satisfying \cref{eq:init:x}.
Since $y$ is eliminated, we do not need to solve \cref{eq:init:y} for $y^0$.

Starting from the initial interior point $\omega^0$ defined in \cref{sec:alg:cp}, we perform PDIPM iterations until the convergence conditions in \cref{sec:alg:alg} (in the preprocessed space) are met.
Finally, we re-use the two QR factorizations to lift the approximate certificate for the preprocessed model to one for the original model.
The residual norms for the lifted certificate could violate the convergence tolerances, but we have not found such violations to be significant on our benchmark instances.

During each PDIPM iteration, we solve the linear system \cref{eq:dirs} for a single LHS matrix and between one and four RHS vectors, to obtain directions vectors needed for one of the stepping procedures described in \cref{sec:alg:step}.
Instead of factorizing the large square nonsymmetric block-sparse LHS matrix, we utilize its structure to reduce the size of the factorization needed.
Some of these techniques are adapted from methods in CVXOPT (see \citet[Section 10.3]{vandenberghe2010cvxopt}).

First we eliminate $s$ and $\kappa$, yielding a square nonsymmetric system, then we eliminate $\tau$ to get a symmetric indefinite system in $x$ and $z$. 
Most interior point solvers use a sparse LDL factorization (with precomputed symbolic factorization) to solve this system.
Although Hypatia can optionally do the same, we see improved performance on our benchmark instances by further reducing the system.
After eliminating $z$, we have a (generally dense) positive definite system, which we solve via a dense Cholesky factorization. 
In terms of the original dimensions of the model before preprocessing (assuming no redundant equalities), the side dimension of this system is $n - p$.
Finally, after finding a solution to \cref{eq:dirs}, we apply several rounds of iterative refinement in working precision to improve the solution quality.

We note that this Cholesky-based system solver method does not require explicit Hessian oracles, only oracles for left-multiplication by the Hessian or inverse Hessian.
As we discuss in \cref{sec:toos} and \citet{coey2021self}, these optional oracles can be more efficient and numerically stable to compute for many exotic cones.
For cones without these oracles, Hypatia calls the explicit Hessian matrix oracle, performing a Cholesky factorization of the Hessian if necessary.
A deeper discussion of Hypatia's linear system solving techniques and optional cone oracles is outside the scope of this paper.

\section{Efficient proximity checks}
\label{sec:search}


Recall that each stepping procedure in \cref{sec:alg:step} uses at least one backtracking search (on a line or a curve) to find a point $\omega$ satisfying an aggregate proximity condition: $\pitwo(\omega) \leq \beta_1$ for the \emph{basic} procedure in \cref{sec:alg:step:basic} or $\piinf(\omega) \leq \beta_2$ for the procedures in \crefrange{sec:alg:step:prox}{sec:alg:step:comb}. 
In \cref{sec:alg:prox}, we define $\pitwo$ and $\piinf$ in \cref{eq:proxagg:l2,eq:proxagg:linf}.
For each primitive cone $k \in \iin{\bKn}$, $0 \leq \pi_k (\omega) \leq \piinf (\omega) \leq \pitwo (\omega)$, and by \cref{lem:interior}, $\pi_k(\omega) < 1$ implies $\bs_k \in \intr \bigl( \bK_k \bigr)$ and $\bz_k \in \intr \bigl( \bK^\ast_k \bigr)$.
We use a schedule of decreasing trial values for the step parameter $\alpha$ and accept the first value that yields a candidate point satisfying the aggregate proximity condition.

Suppose at a particular iteration of the backtracking search, we have the candidate point $\omega$.
We check a sequence of increasingly expensive conditions that are necessary for the proximity condition to hold for $\omega$.
First, we verify that $\bs_k' \bz_k > 0, \forall k \in \iin{\bKn}$, which is necessary for interiority (by a strict version of the dual cone inequality \cref{eq:Kdual}).
Note that this condition implies $\mu(\omega) > 0$.
Next, we verify that $\rho_k(\omega) < \beta, \forall k \in \iin{\bKn}$, where $\rho_k(\omega)$ is:
\begin{equation}
\rho_k(\omega) \coloneqq 
\nu_k^{-1/2} \lvert \bs_k' \bz_k / \mu - \nu_k \rvert \geq 0.
\label{eq:rhoK}
\end{equation}
In \cref{lem:rho} below, we show that $\rho_k(\omega)$ is a lower bound on $\pi_k(\omega)$, so if $\rho_k(\omega) > \beta$ then then $\pi_k(\omega) > \beta$.
Computing $\rho_k$ is much cheaper than computing $\pi_k(\omega)$ as it does not require evaluating any cone oracles.

Next, we iterate over $k \in \iin{\bKn}$ to check first the primal feasibility oracle, then the optional dual feasibility oracle if implemented, and finally the proximity condition $\pi_k(\omega) < \beta$.
Before computing $\pi_k(\omega)$, we check that the gradient and Hessian oracle evaluations approximately satisfy two logarithmic homogeneity conditions \citep[Proposition 2.3.4]{nesterov1994interior}:
\begin{equation}
(g_k(\bs_k))' (H_k (\bs_k))^{-1} g_k(\bs_k)
= -\bs_k' g_k(\bs_k) 
= \nu_k.
\label{eq:lhcheck}
\end{equation}
This allows us to reject $\omega$ if the cone oracles and the proximity value $\pi_k(\omega)$ are likely to be numerically inaccurate.

\begin{lemma}
Given a point $\omega$ for which $\mu(\omega) > 0$, for each $k \in \iin{\bKn}$, $0 \leq \rho_k(\omega) \leq \pi_k(\omega)$.
\label{lem:rho}
\end{lemma}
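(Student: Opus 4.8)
The plan is to reduce the bound to a single application of the Cauchy--Schwarz inequality, after invoking the logarithmic homogeneity identities for the barrier $f_k$. First I would dispose of a degenerate case: if $\bs_k \notin \intr \bigl( \bK_k \bigr)$, then $\pi_k(\omega) = \infty$ by definition \cref{eq:proxhessk} (recall $\mu \coloneqq \mu(\omega) > 0$ is assumed), so the inequality holds trivially; hence I may assume $\bs_k \in \intr \bigl( \bK_k \bigr)$, so that $g \coloneqq g_k(\bs_k)$ and the positive definite matrix $H \coloneqq H_k(\bs_k)$ are well defined. Nonnegativity $\rho_k(\omega) \geq 0$ is immediate from \cref{eq:rhoK}.

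Next I would record the two standard consequences of logarithmic homogeneity of the $\nu_k$-LHSCB $f_k$ (see \citet[Proposition 2.3.4]{nesterov1994interior}, also used in \cref{eq:lhcheck}): namely $H \bs_k = -g$ and $\bs_k' g = -\nu_k$. Combining these gives $\lVert H^{1/2} \bs_k \rVert^2 = \bs_k' H \bs_k = -\bs_k' g = \nu_k$, so $\lVert H^{1/2} \bs_k \rVert = \nu_k^{1/2}$.

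The key step is to recognize $\bs_k' \bz_k / \mu - \nu_k$ as the inner product of the two vectors $H^{-1/2}(\bz_k/\mu + g)$ and $H^{1/2} \bs_k$: using symmetry of $H^{1/2}$ and $H^{-1/2}$ and the identity $\bs_k' g = -\nu_k$,
\[
\bigl( H^{-1/2}(\bz_k/\mu + g) \bigr)' H^{1/2} \bs_k = (\bz_k/\mu + g)' \bs_k = \bs_k' \bz_k / \mu - \nu_k .
\]
Applying Cauchy--Schwarz to the left-hand side, then substituting $\lVert H^{1/2} \bs_k \rVert = \nu_k^{1/2}$ and the definition $\pi_k(\omega) = \lVert H^{-1/2}(\bz_k/\mu + g) \rVert$ from \cref{eq:proxhessk}, yields $\lvert \bs_k' \bz_k / \mu - \nu_k \rvert \leq \nu_k^{1/2}\, \pi_k(\omega)$. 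Dividing by $\nu_k^{1/2}$ gives precisely $\rho_k(\omega) \leq \pi_k(\omega)$.

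No substantial obstacle is expected here: the only points needing care are handling the case $\bs_k \notin \intr \bigl( \bK_k \bigr)$ (where $\pi_k(\omega) = \infty$ makes the claim vacuous) and selecting the correct pair of logarithmic homogeneity identities so that both $\lVert H^{1/2}\bs_k\rVert$ and the inner product above evaluate cleanly; the remainder is Cauchy--Schwarz and bookkeeping.
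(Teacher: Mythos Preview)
Your argument is correct and is in fact cleaner than the paper's. The paper expands $(\pi_k(\omega))^2$ as $\mu^{-2}\bz_k'(H_k(\bs_k))^{-1}\bz_k - 2\mu^{-1}\bz_k'\bs_k + \nu_k$ and then lower-bounds the first term by passing to the conjugate barrier: it uses $(H_k(\bs_k))^{-1} = H_k^\ast(-g_k(\bs_k))$, $g_k^\ast(-g_k(\bs_k)) = \bs_k$, and the $\nu_k$-barrier inequality $(\bz_k' g_k^\ast)^2 \le \nu_k\, \bz_k' H_k^\ast \bz_k$ from \citet[Equation 5.3.6]{nesterov2018lectures}, which yields $\bz_k'(H_k(\bs_k))^{-1}\bz_k \ge \nu_k^{-1}(\bz_k'\bs_k)^2$ and hence $(\pi_k(\omega))^2 \ge \nu_k^{-1}(\bs_k'\bz_k/\mu - \nu_k)^2 = (\rho_k(\omega))^2$. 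Your route sidesteps the conjugate machinery entirely: you work in the primal local metric, use only the homogeneity identities $H\bs_k = -g$ and $\bs_k'g = -\nu_k$, and apply Cauchy--Schwarz directly to the pair $H^{-1/2}(\bz_k/\mu + g)$ and $H^{1/2}\bs_k$. The two arguments are equivalent at bottom (the cited barrier inequality for an LHSCB is itself Cauchy--Schwarz in the Hessian metric), but yours is shorter, needs no facts about $f_k^\ast$, and also explicitly disposes of the boundary case $\bs_k \notin \intr(\bK_k)$, which the paper leaves implicit.
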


\begin{proof}
We fix $\mu = \mu(\omega) > 0$ for convenience.
Let $f_k$ be the $\nu_k$-LHSCB for $\bK_k$, and let the conjugate of $f_k$ be $f^\ast_k$ (see \cref{eq:conj}), which is a $\nu_k$-LHSCB for $\bK_k^\ast$. 
Let $g^\ast_k \coloneqq \nabla f^\ast_k$ and $H^\ast_k \coloneqq \nabla^2 f^\ast_k$ denote the gradient and Hessian operators for $f^\ast_k$.
Using the logarithmic homogeneity properties from \citet[Proposition 2.3.4]{nesterov1994interior}, and from the definition of $\pi_k(\omega)$ in \cref{eq:proxhessk}, we have:
\begin{subequations}
\begin{align}
(\pi_k(\omega))^2
&= (\bz_k / \mu + g_k(\bs_k))' (H_k (\bs_k))^{-1} (\bz_k / \mu + g_k(\bs_k))
\\
&= \mu^{-2} \bz_k' (H_k (\bs_k))^{-1} \bz_k + 
2 \mu^{-1} \bz_k' (H_k (\bs_k))^{-1} g_k(\bs_k) +
(g_k(\bs_k))' (H_k (\bs_k))^{-1} g_k(\bs_k)
\\
&= \mu^{-2} \bz_k' (H_k (\bs_k))^{-1} \bz_k -
2 \mu^{-1} \bz_k' \bs_k + \nu_k.
\end{align}
\label{eq:rho1}
\end{subequations}
By \citet[Equation 13]{papp2017homogeneous}, $(H_k (\bs_k))^{-1} = H^\ast_k (-g_k (\bs_k))$.
Since $f^\ast_k$ is a self-concordant barrier with parameter $\nu_k$, by \citet[Equation 5.3.6]{nesterov2018lectures} we have: $( \bz_k' g^\ast_k (-g_k (\bs_k)) )^2 \leq \nu_k \bz_k' H^\ast_k (-g_k (\bs_k)) \bz_k$.
Furthermore, $g^\ast_k (-g_k (\bs_k)) = \bs_k$.
Using these facts, from \cref{eq:rho1} we have $\rho_k(\omega) \geq 0$ and:
\begin{subequations}
\begin{align}
(\pi_k(\omega))^2
&= \mu^{-2} \bz_k' H^\ast_k (-g_k (\bs_k)) \bz_k -
2 \mu^{-1} \bz_k' \bs_k + \nu_k
\\
&\geq \nu_k^{-1} \mu^{-2} ( \bz_k' \bs_k )^2 -
2 \mu^{-1} \bz_k' \bs_k + \nu_k
\\
&= \nu_k^{-1} ( \bs_k' \bz_k / \mu - \nu_k )
\\
&= (\rho_k(\omega))^2.
\end{align}
\label{eq:rho2}
\end{subequations}
Therefore, $\pi_k(\omega) \geq \rho_k(\omega) \geq 0$ for all $k \in \iin{\bKn}$.
\end{proof}

As an aside, we can use similar arguments to \cref{lem:rho} to show that $\rho_k(\omega)$ also symmetrically bounds a conjugate proximity measure $\pi_k^\ast (\omega)$, which we define as:
\begin{equation}
\pi_k^\ast (\omega)
\coloneqq \big\lVert (H_k^\ast (\bz_k))^{-1/2} (\bs_k / \mu + g_k^\ast (\bz_k)) \big\rVert
\geq \nu_k^{-1/2} \lvert \bz_k' (\bs_k / \mu + g_k^\ast (\bz_k)) \rvert
= \rho_k(\omega).
\end{equation}
In general, we cannot check whether $\pi_k^\ast (\omega) < \beta$ because as we discuss in \cref{sec:intro:alg} we do not have access to fast and numerically stable conjugate barrier oracles ($g_k^\ast$ and $H_k^\ast$).

\section{Computing the TOO for some exotic cones}
\label{sec:toos}

\subsection{Intersections of slices of the PSD cone}
\label{sec:toos:psdslice}

First, we consider a proper cone $\K \subset \bbR^q$ that is an inverse linear image (or slice) of the PSD cone $\bbS^{\jmath}_{\succeq}$ of side dimension $\jmath$.
Suppose:
\begin{equation}
\K \coloneqq \{ 
s \in \bbR^q : \Lambda(s) \succeq 0 
\},
\label{eq:lambdaK}
\end{equation}
where $\Lambda : \bbR^d \to \bbS^{\jmath}$ is a linear operator, with adjoint linear operator $\Lambda^\ast : \bbS^{\jmath} \to \bbR^d$.
Then the dual cone can be characterized as:
\begin{equation}
\K^\ast \coloneqq \{ 
s \in \bbR^q : \exists S \succeq 0, s = \Lambda^\ast(S) 
\}.
\label{eq:lambdaKdual}
\end{equation}
We note that for $\kpsd$ (the self-dual vectorized PSD cone), we can let $q = d = \sdim(\jmath)$, $\Lambda(s) = \mat(s)$, and $\Lambda^\ast(S) = \vect(S)$.
Given a point $s \in \bbR^q$, strict feasibility for $\K$ can be checked, for example, by attempting a Cholesky factorization $\Lambda(s) = L L'$, where $L$ is lower triangular.

For $\K$ we have the LHSCB $f(s) = -\logdet(\Lambda(s))$ with parameter $\nu = \jmath$.
Given a point $s \in \intr(\K)$, we have $\Lambda(s) \in \bbS^{\jmath}_{\succ}$ and its inverse $\Lambda^{-1}(s) \in \bbS^{\jmath}_{\succ}$.
For a direction $\delta \in \bbR^q$, for $f$ at $s$ we can write the gradient, and the Hessian and TOO applied to $\delta$, as (compare to \citet[Section 3]{papp2019sum}):
\begin{subequations}
\begin{align}
g(s)
&= -\Lambda^\ast ( \Lambda^{-1}(s) ),
\label{eq:psdslice:grad}
\\
H(s) \delta 
&= \Lambda^\ast ( \Lambda^{-1}(s) \Lambda(\delta) \Lambda^{-1}(s) ),
\label{eq:psdslice:hess}
\\
\Tau(s, \delta) 
&= \Lambda^\ast ( \Lambda^{-1}(s) \Lambda(\delta) \Lambda^{-1}(s) \Lambda(\delta) \Lambda^{-1}(s) ).
\label{eq:psdslice:too}
\end{align}
\label{eq:psdslice}
\end{subequations}
If we have, for example, a Cholesky factorization $\Lambda(s) = L L'$ (computed during the feasibility check), then the oracles in \cref{eq:psdslice} are easy to compute if $\Lambda$ and $\Lambda^\ast$ are easy to apply.
We can compute the TOO \cref{eq:psdslice:too} using the following steps:
\begin{subequations}
\begin{align}
Y &\coloneqq L^{-1} \Lambda(\delta) \Lambda^{-1}(s),
\label{eq:lamtoo:y}
\\
Z &\coloneqq Y' Y 
= \Lambda^{-1}(s) \Lambda(\delta) \Lambda^{-1}(s) \Lambda(\delta) \Lambda^{-1}(s),
\label{eq:lamtoo:z}
\\
\Tau(s, \delta) &= \Lambda^\ast ( Z ).
\label{eq:lamtoo:too}
\end{align}
\label{eq:lamtoo}
\end{subequations}
We note \cref{eq:lamtoo:y} can be computed using back-substitutions with $L$, and \cref{eq:lamtoo:z} is a simple symmetric outer product.
We use this approach to derive simple TOO procedures for $\klmi$ in \cref{sec:toos:lmi} and for $\kwsos^\ast$ and $\kmatwsos^\ast$ in \cref{sec:toos:wsos} when $r = 1$. 

Now we consider the more general case of a cone $\K$ that can be characterized as an intersection of slices of PSD cones, for example $\kwsos^\ast$ and $\kmatwsos^\ast$ when $r > 1$.
Suppose:
\begin{equation}
\K \coloneqq \{ 
s \in \bbR^q : \Lambda_l(s) \succeq 0, \forall l \in \iin{r} 
\},
\label{eq:lambdasumK}
\end{equation}
where $\Lambda_l : \bbR^d \to \bbS^{\jmath_l}$, for $l \in \iin{r}$.
Then the dual cone can be characterized as:
\begin{equation}
\K^\ast \coloneqq \bigl\{ 
s \in \bbR^q : \exists S_1, \ldots, S_r \succeq 0, s = \tsum{l \in \iin{r}} \Lambda_l^\ast(S_l) 
\bigr\}.
\label{eq:lambdasumKdual}
\end{equation}
Feasibility for $\K$ can be checked by performing $r$ Cholesky factorizations.
If we let $f_l(s) = -\logdet(\Lambda_l(s)), \forall l \in \iin{r}$, then $f(s) = \sum_{l \in \iin{r}} f_l(s)$ is an LHSCB for $\K$ with parameter $\nu = \sum_{l \in \iin{r}} \jmath_l$.
Clearly, $g(s)$, $H(s) \delta$ (and the explicit Hessian matrix), and $\Tau(s, \delta)$ can all be computed as sums over $l \in \iin{r}$ of the terms in \cref{eq:psdslice:too}.

\subsection{LMI cone}
\label{sec:toos:lmi}

We denote the inner product of $X, Y \in \bbS^s$ as $\langle X, Y \rangle = \tr(X Y) \in \bbR$, computable in order of $s^2$ time.
For $\klmi$ parametrized by $P_i \in \bbS^s, \forall i \in \iin{d}$, we define for $w \in \bbR^d$ and $W \in \bbS^s$:
\begin{subequations}
\begin{align}
\Lambda(w) 
&\coloneqq \tsum{i \in \iin{d}} w_i P_i \in \bbS^s,
\\
\Lambda^\ast(W) 
&\coloneqq ( \langle P_i, W \rangle )_{i \in \iin{d}} \in \bbR^d.
\end{align}
\end{subequations}

Our implementation uses specializations of \cref{eq:psdslice,eq:lamtoo} for $\klmi$. 
For $w \in \intr(\klmi)$ and direction $\delta \in \bbR^d$, using the Cholesky factorization $\Lambda(w) = L L'$, we compute:
\begin{subequations}
\begin{align}
Q_i
&\coloneqq L^{-1} P_i (L^{-1})' \in \bbS^s
\quad \forall i \in \iin{d},
\\
g(w) 
&= ( -\tr(Q_i) )_{i \in \iin{d}},
\\
R
&\coloneqq \tsum{j \in \iin{d}} \delta_j Q_j \in \bbS^s,
\\
H(w) \delta
&= ( \langle Q_i, R \rangle )_{i \in \iin{d}},
\\
\Tau(w, \delta)
&= ( \langle Q_i, R' R \rangle )_{i \in \iin{d}},
\label{eq:lmitoo}
\end{align}
\end{subequations}
and we compute the explicit Hessian oracle as:
\begin{equation}
(H(w))_{i,j} 
= \langle Q_i, Q_j \rangle
\quad \forall i, j \in \iin{d}.
\end{equation}
The symmetric form of $Q_i$ and the use of a symmetric outer product $R' R$ in \cref{eq:lmitoo} are beneficial for efficiency and numerical performance.

\subsection{Matrix and scalar WSOS dual cones}
\label{sec:toos:wsos}

Recall that Hypatia uses LHSCBs for $\kwsos^\ast, \kmatwsos^\ast$, because LHSCBs for $\kwsos, \kmatwsos$ with tractable oracles are not known (see \citet{kapelevich2021sum}).
Since the scalar WSOS dual cone $\kwsos^\ast$ is a special case of the matrix WSOS dual cone $\kmatwsos^\ast$ with $t = 1$, we only consider $\kmatwsos^\ast$ here.
In general, $\kmatwsos^\ast$ is an intersection of $r$ slices of $\kpsd$ (see \cref{eq:lambdasumK}), so the gradient, Hessian, and TOO oracles are all additive; for simplicity, we only consider $r = 1$ (and $s_1 = s$, $P_1 = P$) below.

To enable convenient vectorization, we define $\rho_{i,j}$ for indices $i,j \geq 1$ as:
\begin{equation}
\rho_{i,j} \coloneqq \begin{cases}
1 & \text{if } i = j,
\\
\sqrt{2} & \text{otherwise}.
\end{cases}
\label{eq:rho}
\end{equation}
For $\kmatwsos^\ast$ parametrized by $P \in \bbR^{d \times s}$ and $t \geq 1$, we define for $w \in \bbR^{\sdim(t) d}$ and $W \in \bbS^{s t}$:
\begin{subequations}
\begin{align}
\Lambda(w)
&\coloneqq \bigl[ P' \Diag \bigl( \rho^{-1}_{i,j} w_{\max(i,j),\min(i,j),:} \bigr) P \bigr]_{i,j \in \iin{t}} \in \bbS^{s t},
\\
\Lambda^\ast(W) 
&\coloneqq ( \rho_{i,j} \diag(P (W)_{i,j} P') )_{i \in \iin{t}, j \in \iin{i}} \in \bbR^{\sdim(t) d},
\end{align}
\end{subequations}
where $w = (w_{i,j,:})_{i \in \iin{t}, j \in \iin{i}}$ and $w_{i,j,:} \in \bbR^d$ is the contiguous slice of $w$ corresponding to the interpolant basis values in the $(i,j)$th (lower triangle) position,
matrix $(S)_{i,j}$ is the $(i,j)$th block in a block matrix $S$ (with blocks of equal dimension), 
and $[S_{i,j}]_{i,j \in \iin{t}}$ is the symmetric block matrix with matrix $S_{i,j}$ in the $(i,j)$th block.

We implement efficient and numerically stable specializations of the oracles in \cref{eq:psdslice,eq:lamtoo}.
Suppose we have $w \in \intr(\kmatwsos^\ast)$ and direction $\delta \in \bbR^{\sdim(t) d}$, and a Cholesky factorization $\Lambda(w) = L L'$. 
For each $i, j \in \iin{t} : i \geq j$ and $p \in \iin{d}$, we implicitly compute oracles according to:
\begin{subequations}
\begin{align}
(Q)_{i,j,p}
&\coloneqq ((L^{-1})_{i,j} P') e_p
\in \bbR^s,
\\
(g(w))_{i,j,p}
&= -\rho_{i,j} Q_{i,:,p}' Q_{:,j,p},
\\
(R)_{i,j,p}
&\coloneqq ( L^{-1} \Lambda(\delta) (L^{-1})' Q )_{i,j} e_p
\in \bbR^s,
\\
(H(w) \delta)_{i,j,p}
&= \rho_{i,j} Q_{i,:,p}' R_{:,j,p},
\\
(\Tau(w, \delta))_{i,j,p}
&= \rho_{i,j} R_{i,:,p}' R_{:,j,p}.
\end{align}
\end{subequations}
Letting $Q^2_{i,j} \coloneqq (Q' Q)_{i,j} \in \bbS^d$, we compute the Hessian oracle according to:
\begin{equation}
(H(w))_{(i,j,:), (k,l,:)}
= \tfrac{1}{2} \rho_{i,j} \rho_{k,l} \bigl(
Q^2_{i,k} \circ Q^2_{j,l} + Q^2_{i,l} \circ Q^2_{j,k}
\bigr) \in \bbS^d
\quad \forall i, j, k, l \in \iin{t},
\end{equation}
where $X \circ Y \in \bbS^d$ denotes the Hadamard (elementwise) product of $X, Y \in \bbS^d$.

\subsection{Sparse PSD cone}
\label{sec:toos:sppsd}

Let $\mathcal{S} = ((i_l, j_l))_{l \in \iin{d}}$ be a collection of row-column index pairs defining the sparsity pattern of the lower triangle of a symmetric matrix of side dimension $s$ (including all diagonal elements).
We do not require $\mathcal{S}$ to be a chordal sparsity pattern (unlike \citet{andersen2013logarithmic,burer2003semidefinite}), as this restriction is not necessary for the oracles Hypatia uses.
Note $s \leq d \leq \sdim(s)$.
For $\ksppsd$ parametrized by $\mathcal{S}$, we define $\Lambda : \bbR^d \to \bbS^s$ as the linear operator satisfying, for all $i, j \in \iin{s} : i \geq j$:
\begin{equation}
( \Lambda(w) )_{i,j} \coloneqq
\begin{cases}
\rho^{-1}_{i,j} w_l & \text{if } i = i_l = j = j_l,
\\
0 & \text{otherwise},
\end{cases}
\label{eq:sppsd:lam}
\end{equation}
where $\rho_{i,j}$ is given by \cref{eq:rho}.
Then $\Lambda^\ast$ is the vectorized projection onto $\mathcal{S}$, i.e.\ for $W \in \bbS^s$:
\begin{equation}
\Lambda^\ast(W) 
\coloneqq ( \rho_{i, j} W_{i, j} )_{(i,j) \in \mathcal{S}} \in \bbR^d.
\label{eq:sppsd:lamstar}
\end{equation}
Consider $w \in \intr(\ksppsd)$ and direction $\delta \in \bbR^d$.
The gradient \cref{eq:psdslice:grad} and Hessian product \cref{eq:psdslice:hess} for $\ksppsd$ can be computed using \citet[Algorithms 4.1 and 5.1]{andersen2013logarithmic}.
To derive the TOO, we use the fact that:
\begin{equation}
-2 \Tau (w, \delta) 
= \nabla^3 f(w) [\delta, \delta] 
= \tfrac{d^2}{d t^2} \nabla f(w + t \delta) \big\vert_{t=0}.
\label{eq:toodir}
\end{equation}

In order to succinctly describe our TOO approach as an extension of the procedures in \citet{andersen2013logarithmic}, we describe an approach based on a sparse LDL factorization of $\Lambda(w)$.
However, our current implementation in Hypatia uses a sparse Cholesky ($L L'$) factorization, which is very similar to the LDL-based approach here.
We compute the sparse Cholesky factors using Julia's SuiteSparse wrapper of CHOLMOD \citep{chen2008algorithm}.
We note that Hypatia implements a \emph{supernodal} generalization (see \citet[Section 7]{andersen2013logarithmic}) of the TOO procedure we describe below.
Before we describe the TOO procedure, we repeat useful definitions from \citet{andersen2013logarithmic}, define higher order derivative terms, and differentiate several equations that are used for the gradient and Hessian oracles.
As discussed in \cref{sec:cones}, Hypatia computes the feasibility check and gradient oracles before the TOO, and our TOO procedure reuses cached values computed for these oracles.

We define:
\begin{equation}
R \coloneqq \Lambda( \nabla f(w + t \delta) ).
\label{eq:toodirR}
\end{equation}
Let $L D L' = \Lambda(w)$ be a sparse LDL factorization, i.e.\ $L$ is a sparse unit lower triangular matrix and $D$ is a positive definite diagonal matrix.
The sparsity pattern of $L$ is associated with an \emph{elimination tree} \citep[Section 2]{andersen2013logarithmic}, and each column of $L$ corresponds to a node of this tree.
Let $I_k$ be the ordered row indices of nonzeros below the diagonal in column $k$ of $L$, and let $J_k = I_k \cup \{k\}$.
Let $\ch(i)$ denote the children of node $i$ in the tree.
For an index set $I$ let $I(i)$ denote the $i$th element.
For index sets $J \subset I$, we define $E_{I,J} \in \bbR^{\lvert I \rvert \times \lvert J \rvert}$ satisfying, $i \in \iin{\lvert I \rvert}$, $j \in \iin{\lvert J \rvert}$:
\begin{equation}
(E_{I,J})_{i,j} \coloneqq \begin{cases}
1 & \text{if } I(i) = J(j),
\\
0 & \text{otherwise.}
\end{cases}
\end{equation}
Let $U_i$ be the update matrix for node $i$ (see \citet[Equation 14]{andersen2013logarithmic}):
\begin{align}
U_i \coloneqq 
-\tsum{k \in \ch(i) \cup \{i\}} D_{k,k} L_{I_i,k} L_{I_i,k}'.
\end{align}
Let $\dotD$, $\dotL$, $\dotU$, $\dotR$ and $\ddotD$, $\ddotL$, $\ddotU$, $\ddotR$ denote the first and second derivatives of $D$, $L$, $U$, $R$ with respect to the linearization variable $t$ in \cref{eq:toodir}.
For convenience, we let:
\begin{equation}
\bar{L}_j \coloneqq \begin{bmatrix}
1 & 0 \\
-L_{I_j,j} & I
\end{bmatrix}.
\end{equation}

Suppose we have computed $\dotD$, $\dotL$, $\dotU$ according to \citet[Equation 30]{andersen2013logarithmic}.
Differentiating \citet[Equation 30]{andersen2013logarithmic} once with respect to $t$ gives:
\begin{equation}
\begin{bmatrix}
\ddotD_{j,j} 
& P_j'
\\[1ex]
P_j
& 2 D_{j,j} \dotL_{I_j,j} \dotL_{I_j,j}' + \ddotU_j
\end{bmatrix}
=
\bar{L}_j \left(
\tsum{i \in \ch(j)} E_{J_j,I_i} \ddotU_i E_{J_j,I_i}'
\right) \bar{L}_j'
,
\label{eq:sparsepsd1}
\end{equation}
where $P_j \coloneqq 2 \dotD_{j,j} \dotL_{I_j,j} + D_{j,j} \ddotL_{I_j,j}$ for convenience.
This allows us to compute $\ddotD$, $\ddotL$, $\ddotU$.
\citet[Equations 21 and 22]{andersen2013logarithmic} show that:
\begin{subequations}
\begin{align}
R_{I_j,j} 
&= -R_{I_j,I_j} L_{I_j,j},
\label{eq:SIjj}
\\
\begin{bmatrix}
R_{j,j} & R_{I_j,j}'
\\[0.8ex]
R_{I_j,j} & R_{I_j,I_j}
\end{bmatrix}
\begin{bmatrix}
1 \\
L_{I_j, j}
\end{bmatrix}
&=
\begin{bmatrix}
D_{j,j}^{-1} 
\\[0.8ex]
0
\end{bmatrix}
,
\label{eq:Sblock}
\end{align}
\end{subequations}
for each node $j$.
Differentiating \cref{eq:SIjj} once with respect to $t$ gives:
\begin{equation}
\dotR_{I_j,j} = -R_{I_j,I_i} \dotL_{I_j,j} - \dotR_{I_j,I_j} L_{I_j,j}.
\label{eq:dSIjj}
\end{equation}
Differentiating \cref{eq:Sblock} twice and substituting \cref{eq:SIjj,eq:dSIjj}, we have:
\begin{equation}
\begin{bmatrix}
\ddotR_{j,j} & \ddotR_{I_j,j}' 
\\[1ex]
\ddotR_{I_j,j} & \ddotR_{I_j,I_j}
\end{bmatrix}
=
\bar{L}_j'
\begin{bmatrix}
2 \dotD_{j,j}^2 D_{j,j}^{-3} - \ddotD_{j,j} D_{j,j}^{-2} + 2 \dotL_{I_j,j}' R_{I_j,I_j} \dotL_{I_j,j} 
& Q_j'
\\[1ex]
Q_j
& \ddotR_{I_j,I_j}
\end{bmatrix} 
\bar{L}_j
,
\label{eq:sparsepsd2}
\end{equation}
where $Q_j \coloneqq -R_{I_j,I_j} \ddotL_{I_j,j} - 2 \dotR_{I_j,I_j} \dotL_{I_j,j}$ for convenience.
This allows us to compute $\ddotR$.
Finally, by \cref{eq:toodir,eq:toodirR}, we can compute the TOO as:
\begin{equation}
-2 \Tau (w, \delta) = \Lambda^\ast \bigl( \ddotR \bigr).
\label{eq:sparsepsd:too}
\end{equation}

We now write the high-level TOO procedure.
For convenience, we let:
\begin{equation}
\Delta = \Lambda(\delta) \in \bbS^s.
\end{equation}
Following \citet{andersen2003implementing}, we define $K$ and $M$ as sparse matrices with the same structure as $L$, satisfying for all $j \in \iin{s}$:
\begin{subequations}
\begin{align}
K_{j,j} &= \dotD_{j,j}, 
\label{eq:Kjj}
\\
K_{I_j,j} &= D_{j,j} \dotL_{I_j,j}, 
\label{eq:KIj}
\\
M_{j,j} &= D_{j,j}^{-2} K_{j,j}, 
\label{eq:Mjj} 
\\
M_{I_j,j} &= D^{-1}_{j,j} R_{I_j,I_j} K_{I_j,j}. 
\label{eq:MIj}
\end{align}
\end{subequations}
The first three steps in the TOO procedure below compute $\dotD$, $\dotL$, $\dotU$, and $\dotR$ and are identical to steps in \citet[Algorithm 5.1]{andersen2013logarithmic}.
\begin{enumerate}[wide]
\item Iterate over $j \in \iin{s}$ in topological order, computing $K_{J_j,j}$ and $\dotU_j$ according to:
\begin{equation}
\begin{bmatrix}
K_{j,j} & K_{I_j,j}' \\
K_{I_j,j} & U_j'
\end{bmatrix}
=
\bar{L}_j \left(
\begin{bmatrix}
\Delta_{j,j} & \Delta_{I_j,j}' \\
\Delta_{I_j,j} & 0
\end{bmatrix}
+ \tsum{i \in \ch(j)} E_{J_j,I_i} U_i' E'_{J_j,I_i}
\right) \bar{L}_j'.
\end{equation}
\item For $j \in \iin{s}$, store $\dotD_{j,j}$ and $\dotL_{I_j,j}$ from \cref{eq:Kjj,eq:KIj}, and compute $M_{J_j,j}$ from \cref{eq:Mjj,eq:MIj}.
\item Iterate over $j \in \iin{s}$ in reverse topological order, computing $\dotR_{J_j,j}$ according to:
\begin{equation}
\begin{bmatrix}
\dotR_{j,j} & \dotR_{I_j,j}' \\
\dotR_{I_j,j} & \dotR_{I_j,I_j}
\end{bmatrix}
= \bar{L}'_j
\begin{bmatrix}
\vphantom{\dotR}
M_{j,j} & M_{I_j,j}' \\
M_{I_j,j} & \dotR_{I_j,I_j}
\end{bmatrix}
\bar{L}_j,
\end{equation}
and updating matrices $\dotR_{I_j,I_j}$ for each child $i \in \ch(j)$ of vertex $j$ according to:
\begin{equation}
\dotR_{I_i,I_i} = E_{J_j,I_i}'
\begin{bmatrix}
\dotR_{j,j} & \dotR_{I_j,j}' \\
\dotR_{I_j,j} & \dotR_{I_j,I_j}
\end{bmatrix}
E_{J_j,I_i}.
\end{equation}
\item Iterate over $j \in \iin{s}$ in topological order, computing $\ddotD_{j,j}, \ddotL_{I_j,j}, \ddotU_j$ from \cref{eq:sparsepsd1}.
\item Iterate over $j \in \iin{s}$ in reverse topological order, computing $\ddotR_{j,j}$, $\ddotR_{I_j,j}, \ddotR_{I_j,I_j}$ from \cref{eq:sparsepsd2}.
\item Compute $\Tau (w, \delta)$ using $\ddotR$ and \cref{eq:sparsepsd:too}.
\end{enumerate}

\subsection{Euclidean norm cone and Euclidean norm square cone}
\label{sec:toos:eucl}

Although $\kltwo, \ksqr \subset \bbR^q$ are inverse linear images of $\bbS^q_{\succeq}$ and hence admit LHSCBs with parameter $\nu = q$, we use the standard LHSCBs with parameter $\nu = 2$, which have a different form (see \citet[Section 2.2]{vandenberghe2010cvxopt}).
For $\kltwo, \ksqr$, the LHSCB is $f(s) = -\log(s' J s)$, where $J \in \bbS^q$ is defined according to, for $i,j \in \iin{q} : i \geq j$: 
\begin{align}
J_{i,j} & \coloneqq
\begin{cases}
1 
& \text{if } j = 1 \text{ and } \bigl( i = 1 \text{ for } \kltwo \text{ or } i = 2 \text{ for } \ksqr \bigr), 
\\[0.5ex]
-1 
& \text{if } i = j \text{ and } \bigl( i > 1 \text{ for } \kltwo \text{ or } i > 2 \text{ for } \ksqr \bigr), 
\\[0.5ex]
0 & \text{otherwise.}
\end{cases}
\end{align}
Consider $s \in \intr(\K)$ and direction $\delta \in \bbR^q$, and let $\bar{J} = (s' J s)^{-1} > 0$.
The gradient, Hessian product, and TOO oracles for $\K$ are:
\begin{subequations}
\begin{align}
g(s)
&= -2 \bar{J} J s, 
\\
H(s) \delta 
&= 2 \bar{J} ( 2 \bar{J} J s s' J \delta - J \delta ),
\\
\Tau(s, \delta) 
&= \bar{J} ( 
J s \delta' H \delta + H \delta s' J \delta - s' H \delta J \delta 
).
\end{align}
\label{eq:eucl}
\end{subequations}
These oracles are computed in order of $d$ time.
The Hessian oracle is computed in order of $d^2$ time as:
\begin{equation}
H(s) = 2 \bar{J} ( 2 \bar{J} J s s' J - J ).
\end{equation}

\bibliographystyle{abbrvnat}
\bibliography{refs}

\end{document}